\pgfplotsset{compat=1.11}
\newlength\fwidth
\title{Spectral alignment of kernel matrices and applications}
\author[1]{Tizian Wenzel\thanks{tizian.wenzel@uni-hamburg.de}, Armin Iske}
\newif\iflong			
\begin{document}

\maketitle %
  
\begin{abstract}
Kernel matrices are a key quantity in kernel-based approximation, 
and important properties such as stability and algorithmic convergence can be analyzed with their help.

In this work we refine a multivariate Ingham-type theorem, which is then leveraged to obtain novel and refined stability estimates on kernel matrices.
For this, we focus on the case of finitely smooth kernels, 
such as the family of Matérn or Wendland kernels, 
while noting that the results also extend to norm-equivalent kernels.
In particular we obtain results that relate the Rayleigh quotients of kernel matrices for kernels of different smoothness to each other.
Finally we comment on conclusions for the eigenvectors of these kernel matrices.
\end{abstract}

\section{Introduction}

Kernel based approximation is a popular tool in different areas of applied mathematics, 
such as radial basis function approximation \cite{buhmann2000radial}, 
scattered data approximation \cite{wendland2005scattered} or recently also in the analysis of neural networks \cite{jacot2018neural}.
These kernels are symmetric functions $k: \Omega \times \Omega \rightarrow \R$ defined on some set $\Omega$,
for which frequently some positive definiteness properties are assumed.
In this paper we focus on kernels defined on $\Omega \subseteq \R^d$.
Given points $X \subset \Omega$, the kernel matrix is defined via all pairwise kernel evaluations, i.e.\ $(k(x_i, x_j))_{x_i, x_j \in X} \in \R^{|X| \times |X|}$.
With slight abuse of notation, we sometimes use the notation $k(X, X)$ for the kernel matrix.
If this kernel matrix is positive definite for any choice of pairwise distinct points $X \subset \Omega$, 
then the kernel $k$ is denoted as a strictly positive definite kernel.

This kernel matrix is fundamental for the implementation and analysis of kernel based algorithms.
For example, greedy kernel algorithms essentially realize a decomposition of the kernel matrix \cite{santin2017convergence},
and the convergence analysis relies on properties of the kernel matrix \cite{wenzel2021novel,wenzel2023analysis}.
Furthermore, properties of the kernel matrix can be related to properties of the Mercer expansion of kernels  \cite{steinwart2012mercer,santin2016approximation},
thus connecting to the field of approximation theory.
More generally, estimates on the spectrum of the kernel matrix -- and in particular on its smallest eigenvalue -- are fundamental for a wide range of applications. 
For instance, they directly lead to results in bandlimited interpolation \cite[Section 3]{narcowich2006sobolev}, 
to Bernstein inequalities for kernel spaces \cite[Lemma 3.2]{cheung2018convergence} and enable showing inverse statements for kernel based approximation \cite{schaback2002inverse,wenzel2025sharp}.
Additionally, due to the connection with the Neural Tangent Kernel (NTK) \cite{jacot2018neural}, 
properties of kernel matrices are also linked to learning properties of neural networks, influencing factors such as convergence rates in terms of the smallest eigenvalue, spectral bias, and overall generalization performance \cite{bietti2019inductive,cao2019towards}.
Significant research has focused on analyzing the smallest eigenvalue of kernel matrices \cite{narcowich1994condition,schaback1995error,diederichs2019improved}, 
as it is key to understanding the stability of kernel-based algorithms, particularly in relation to the condition number of the kernel matrix.

In this work we advance in this line of research.
This paper is structured as follows.
In \Cref{sec:background} we recall required background information and set the notation. 
\Cref{sec:symbol_funcs} presents a refinement of a multivariate Ingham-type theorem and presents a localization estimate for typically occuring integrals revolving around the symbol function.
Subsequently, \Cref{sec:novel_refined_estimates} presents applications of these result,
namely novel stability estimates that connect kernel matrices for kernels of different smoothness.
\Cref{sec:numerical_experiments} provides numerical experiments highlighting the theoretical results.
Finally, \Cref{sec:conclusion_outlook} summarizes the paper and provides an outlook.

\section{Background}
\label{sec:background}

\subsection{Translational invariant kernels}

We will mostly deal with the frequent case of translational invariant kernels that can be written as $k(x, z) = \Phi(x-z)$ for some function $\Phi: \R^d \rightarrow \R$.
The smoothness of such kernels is frequently characterized in terms of the decay rate of the Fourier transform $\hat{\Phi}$ of the function $\Phi$.
In particular we will be interested in kernels for which the Fourier transform $\hat{\Phi}$ satisfies
\begin{align}
\label{eq:fourier_decay}
c_\Phi (1+\Vert \omega \Vert^2)^{-\tau} \leq \hat{\Phi}(\omega) \leq C_\Phi (1 + \Vert \omega \Vert^2)^{-\tau} \qquad \forall \omega \in \R^d
\end{align}
for some constants $c_\Phi, C_\Phi > 0$ and a decay rate $\tau > d/2$,
where $\Vert \cdot \Vert$ refers to the Euclidean norm of $\R^d$.
We will also use the notation $\hat{\Phi}(\omega) \asymp (1+\Vert \omega \Vert^2)^{-\tau}$ for such a two sided inequality as in Eq.~\eqref{eq:fourier_decay}.
A subclass of such translational invariant kernels is given by radial basis function (RBF) kernels,
which can be written as $k(x, z) = \Phi(x - z) = \varphi(\Vert x -z \Vert)$ for a univariate radial basis function $\varphi: \R_{\geq 0} \rightarrow \R$.
 
For every positive definite kernel, there is a unique so called reproducing kernel Hilbert space (RKHS) of functions,
where the kernel acts as a reproducing kernel.
The popular class of Matérn kernels (see \Cref{tab:overview_matern} or \cite[Section 4.4]{fasshauer2007meshfree}) %
are of the form in Eq.~\eqref{eq:fourier_decay}, 
and the reproducing kernel Hilbert spaces of these kernels can be shown to be norm equivalent to the Sobolev spaces $H^\tau(\Omega)$ under mild assumptions on the domain $\Omega \subset \R^d$.
For such kernels, much effort was done to lower bound the smallest eigenvalue $\lambda_{\min}(A_X)$ of the kernel matrix $A_X$ for pairwise distinct points $X \subset \Omega$.
For this, the separation distance $q_{X}$ was found to be important, 
which describes the minimal distance between two points of $X$ and is defined as
\begin{align}
\label{eq:sep_distance}
q_X := \min_{x_i \neq x_j \in X} \Vert x_i - x_j \Vert. %
\end{align}

We recall a typical statement that lower bounds the smallest eigenvalue in terms of the separation distance, 
see e.g.~\cite{schaback1995error}: %

\begin{theorem}
\label{th:stability_statement}
Let $k$ be an RBF kernel such that Eq.~\eqref{eq:fourier_decay} holds for some $\tau > d/2$
and let $\Omega \subset \R^d$ be bounded. 

Then there is a $c_{\min} > 0$ (only depending on $\tau, d$, $\Omega$, $\Phi$) such that for any set of pairwise distinct points $X \subset \Omega$ the smallest eigenvalue of the kernel matrix $A_X = k(X, X)$ can be bounded as
\begin{align}
\label{eq:lambda_min_sobolev1}
\lambda_{\min}(A_{X}) \geq c_{\min} q_{X}^{2\tau - d}
\end{align}
and thus %
\begin{align*}
\left \Vert A_X^{-1} \right \Vert_{2,2} \leq c_{\min}^{-1} q_{X}^{d - 2\tau}.
\end{align*}
\end{theorem}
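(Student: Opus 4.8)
The plan is to pass to the Fourier side, exploit the two-sided bound in Eq.~\eqref{eq:fourier_decay}, and invoke a multivariate Ingham-type inequality that turns the separation distance into a frequency cutoff. First, note that the lower bound in Eq.~\eqref{eq:fourier_decay} forces $\hat\Phi(\omega)>0$ for all $\omega\in\R^d$, while the upper bound gives $\hat\Phi\in L^1(\R^d)$; hence $\Phi$ is continuous, Bochner's theorem applies, and $k$ is strictly positive definite. In particular $A_X$ is symmetric positive definite, so $\Vert A_X^{-1}\Vert_{2,2}=\lambda_{\min}(A_X)^{-1}$, and the second estimate follows at once from the first. It therefore suffices to prove Eq.~\eqref{eq:lambda_min_sobolev1}.

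For Eq.~\eqref{eq:lambda_min_sobolev1} I would express the Rayleigh quotient of $A_X=(\Phi(x_i-x_j))_{i,j}$ through Fourier inversion: for $\alpha\in\R^{|X|}$,
\begin{align*}
\alpha^\top A_X\alpha
&= (2\pi)^{-d/2}\int_{\R^d}\hat\Phi(\omega)\,\Bigl|\sum_{j}\alpha_j e^{-\mathrm{i}\,x_j\cdot\omega}\Bigr|^2\,\mathrm{d}\omega \\
&\geq c_\Phi\,(2\pi)^{-d/2}(1+M^2)^{-\tau}\int_{\Vert\omega\Vert\leq M}\Bigl|\sum_{j}\alpha_j e^{-\mathrm{i}\,x_j\cdot\omega}\Bigr|^2\,\mathrm{d}\omega,
\end{align*}
where in the inequality I used the lower bound of Eq.~\eqref{eq:fourier_decay} and restricted the integration to the ball of radius $M>0$, which is still at our disposal.

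The heart of the proof is a lower bound for the remaining integral, which is precisely the multivariate Ingham-type statement: there are constants $c_d>0$ and $M_d\geq 1$ depending only on $d$ such that for every finite set $X\subset\R^d$ of pairwise distinct points, every $\alpha$, and every $M\geq M_d/q_X$,
\begin{align*}
\int_{\Vert\omega\Vert\leq M}\Bigl|\sum_{j}\alpha_j e^{-\mathrm{i}\,x_j\cdot\omega}\Bigr|^2\,\mathrm{d}\omega \;\geq\; c_d\,M^d\,\Vert\alpha\Vert_2^2.
\end{align*}
I would prove this by inserting a nonnegative, even weight $\chi$ supported in $\{\Vert\omega\Vert\leq M\}$ whose (inverse) Fourier transform $\check\chi$ is nonnegative, concentrated at the origin with $\check\chi(0)$ of order $M^d$, and has controlled decay; then expanding $\int\chi(\omega)\,|\sum_j\alpha_j e^{-\mathrm{i}\,x_j\cdot\omega}|^2\,\mathrm{d}\omega = \sum_{i,j}\alpha_i\alpha_j\,\check\chi(x_i-x_j)$ (up to a dimensional constant), keeping the diagonal term $\check\chi(0)\Vert\alpha\Vert_2^2$, and absorbing the off-diagonal terms via the decay of $\check\chi$ together with a packing estimate on the number of points of $X$ in dyadic annuli around a fixed $x_i$ --- finite because the points are $q_X$-separated. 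Choosing $M$ a sufficiently large multiple of $1/q_X$ forces the off-diagonal contribution to be at most half the diagonal one. I expect this step to be the main obstacle: making all constants depend on $d$ alone and the off-diagonal cancellation uniform over $X$ is the delicate part, while the rest is bookkeeping.

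Finally I would combine the pieces with the choice $M:=\max(1,\,M_d/q_X)$, which is admissible for the Ingham bound and satisfies $M\geq 1$, so that $(1+M^2)^{-\tau}\geq 2^{-\tau}M^{-2\tau}$ and
\begin{align*}
\alpha^\top A_X\alpha \;\geq\; c_\Phi\,2^{-\tau}(2\pi)^{-d/2}\,c_d\,M^{\,d-2\tau}\,\Vert\alpha\Vert_2^2.
\end{align*}
Since $d-2\tau<0$ and $q_X\leq\operatorname{diam}(\Omega)=:D$ (boundedness of $\Omega$ is genuinely used here: otherwise $q_X$ could be arbitrarily large, making the right-hand side of Eq.~\eqref{eq:lambda_min_sobolev1} blow up although $\lambda_{\min}(A_X)\leq\Phi(0)$), a short case distinction between $M=M_d/q_X$ and $M=1$ gives $M^{\,d-2\tau}\geq\min\{M_d^{\,d-2\tau},\,D^{\,d-2\tau}\}\,q_X^{\,2\tau-d}$. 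Minimizing over $\Vert\alpha\Vert_2=1$ then yields Eq.~\eqref{eq:lambda_min_sobolev1} with a constant $c_{\min}>0$ depending only on $d,\tau,\Phi,\Omega$, as claimed.
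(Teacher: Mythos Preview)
Your proposal is correct and follows essentially the same route as the paper: express the Rayleigh quotient via Eq.~\eqref{eq:kernel_vector_product}, restrict to a ball of radius $\asymp q_X^{-1}$, lower-bound $\hat\Phi$ there via Eq.~\eqref{eq:fourier_decay}, and then apply a multivariate Ingham-type lower bound of the form $\int_{B_M}|\sum_j\alpha_j e^{i\omega\cdot x_j}|^2\,\mathrm d\omega\geq c_d M^d\Vert\alpha\Vert^2$ (this is exactly \Cref{th:ingham_upper_refined} in the paper, and the paper's proof is \Cref{th:lower_bound_easy} together with \Cref{cor:minimal_eigenvalue_bound}). The only noteworthy difference is in how the Ingham inequality itself is established: you sketch a weight with decaying inverse Fourier transform plus a packing argument to absorb the off-diagonal terms, whereas the paper builds the weight from the first Dirichlet Laplace eigenfunction on a ball so that it has \emph{compact} support in $B_{1/R}$, making all off-diagonal terms $\check\chi(x_i-x_j)$ vanish identically once $q_X\geq 1/R$ --- this avoids the packing step entirely and yields explicit constants in terms of $\lambda_{\min}(-\Delta)$.
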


The proof of \Cref{th:stability_statement} is based on Fourier techniques,
which have become the main tool for such bounds \cite{narcowich1994condition,schaback1995error,diederichs2019improved}.
The idea is to write the Rayleigh quotient of the kernel matrix with help of the Fourier transform of the translational invariant kernel via
\begin{align}
\label{eq:kernel_vector_product}
\alpha^\top A_{X} \alpha &= (2\pi)^{-d/2} \cdot \int_{\R^d} \hat{\Phi}(\omega) \cdot \left \vert \sum_{j=1}^{|X|} \alpha_j e^{i \omega^\top x_j} \right \vert^2 ~ \mathrm{d}\omega,
\end{align}
which holds for any set of points $X = \{ x_1, ..., x_{|X|} \}$ and any translational invariant kernel $k(x, z) = \Phi(x-z)$.
Subsequently, $\hat{\Phi}(\omega)$ is bounded below by another function $\hat{\Psi}(\omega)$,
such that by using Eq.~\eqref{eq:kernel_vector_product} again in the reverse way, one obtains a diagonally dominant matrix,
for which the smallest eigenvalue can be bounded with standard tools like the Gershgorin circle theorem.

In \Cref{subsec:simple_proof} we present a simple proof of \Cref{th:stability_statement} based on a refined multivariate Ingham-type theorem that is presented in \Cref{subsec:refined_multivariate}.

\subsection{Norm equivalent kernels}
\label{subsec:norm_equiv_kernels}

In order to be able to derive results also for a broader class of kernels, that might not be translational invariant, we recall the notion of norm-equivalent kernels.
Two kernels $k_1$ and $k_2$, both defined on the same set $\Omega$ are said to be norm-equivalent,
if their RKHS $\mathcal{H}_{k_1}(\Omega)$ and $\mathcal{H}_{k_2}(\Omega)$ coincide (as sets) and the corresponding RKHS norms are norm-equivalent.
For these kernels, it is possible to derive a simple characterization of the Rayleigh quotients of kernel matrices, see e.g.~\cite[Proposition F.8]{haas2023mind}.

\begin{theorem}
\label{th:equivalent_kernels}
Let $k_1, k_2: \Omega \times \Omega \rightarrow \R$ be kernels such that their RKHS $\mathcal{H}_{k_1}(\Omega)$ and $\mathcal{H}_{k_2}(\Omega)$ coincide as sets and their norms satisfy
\begin{align*}
C^{-1} \cdot \Vert \cdot \Vert_{\mathcal{H}_{k_1}(\Omega)} \leq \Vert \cdot \Vert_{\mathcal{H}_{k_2}(\Omega)} \leq C \Vert \cdot \Vert_{\mathcal{H}_{k_1}(\Omega)}
\end{align*}
for some $C > 0$.
Then for any pairwise distinct points $X \subset \Omega$,
the kernel matrices $A_1 = k_1(X, X)$ and $A_2 = k_2(X, X)$ satisfy
\begin{align*}
C^{-2} \cdot \frac{\alpha^\top A_1 \alpha}{\Vert \alpha \Vert^2} ~\leq~ \frac{\alpha^\top A_2 \alpha}{\Vert \alpha \Vert^2} ~\leq~ C^2 \cdot \frac{\alpha^\top A_1 \alpha}{\Vert \alpha \Vert^2} \qquad \forall 0 \neq \alpha \in \R^{|X|}
\end{align*}
\end{theorem}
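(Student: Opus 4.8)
The plan is to represent each Rayleigh quotient by a single variational expression over the common RKHS in which all kernel dependence is confined to the RKHS norm. First, for an arbitrary kernel $k$ on $\Omega$ with RKHS $\mathcal{H}_k(\Omega)$ and pairwise distinct points $X = \{x_1, \dots, x_{|X|}\}$, I would establish the identity
\begin{align}
\label{eq:rayleighsup}
\alpha^\top k(X, X) \alpha \;=\; \sup_{0 \neq f \in \mathcal{H}_k(\Omega)} \frac{\left( \sum_{j=1}^{|X|} \alpha_j f(x_j) \right)^2}{\Vert f \Vert_{\mathcal{H}_k(\Omega)}^2}, \qquad \alpha \in \R^{|X|}.
\end{align}
This is a direct consequence of the reproducing property: with $g_\alpha := \sum_{j=1}^{|X|} \alpha_j k(\cdot, x_j) \in \mathcal{H}_k(\Omega)$ one has $\langle f, g_\alpha \rangle_{\mathcal{H}_k(\Omega)} = \sum_{j=1}^{|X|} \alpha_j f(x_j)$ for every $f \in \mathcal{H}_k(\Omega)$ and $\Vert g_\alpha \Vert_{\mathcal{H}_k(\Omega)}^2 = \alpha^\top k(X, X) \alpha$, so that \eqref{eq:rayleighsup} is precisely the Cauchy--Schwarz inequality together with its equality case $f = g_\alpha$ (the degenerate case $g_\alpha = 0$ being trivial, so that no invertibility of the kernel matrix is required).

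Second, I would exploit that in \eqref{eq:rayleighsup} both the numerator $(\sum_j \alpha_j f(x_j))^2$ and the set over which the supremum is taken depend on $f$ only through its membership in the \emph{set} $\mathcal{H}_{k_1}(\Omega) = \mathcal{H}_{k_2}(\Omega)$ and its point values at the $x_j$, and not at all on which of the two kernels is used; the entire kernel dependence has been pushed into the denominator $\Vert f \Vert_{\mathcal{H}_{k_i}(\Omega)}^2$. Applying \eqref{eq:rayleighsup} to $k_2$ and then the assumed inequality $\Vert f \Vert_{\mathcal{H}_{k_2}(\Omega)}^2 \geq C^{-2} \Vert f \Vert_{\mathcal{H}_{k_1}(\Omega)}^2$ gives
\begin{align*}
\alpha^\top A_2 \alpha \;=\; \sup_{0 \neq f} \frac{\left( \sum_{j=1}^{|X|} \alpha_j f(x_j) \right)^2}{\Vert f \Vert_{\mathcal{H}_{k_2}(\Omega)}^2} \;\leq\; C^2 \sup_{0 \neq f} \frac{\left( \sum_{j=1}^{|X|} \alpha_j f(x_j) \right)^2}{\Vert f \Vert_{\mathcal{H}_{k_1}(\Omega)}^2} \;=\; C^2 \, \alpha^\top A_1 \alpha ,
\end{align*}
and swapping the roles of $k_1$ and $k_2$ yields the reverse bound $C^{-2} \alpha^\top A_1 \alpha \leq \alpha^\top A_2 \alpha$. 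Dividing by $\Vert \alpha \Vert^2$ gives the claim.

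I expect the only point requiring care to be the justification of \eqref{eq:rayleighsup} and, relatedly, the insistence that the two RKHS coincide \emph{as sets of functions on $\Omega$}, so that ``point evaluation at $x_j$'' is literally one and the same linear functional in $\mathcal{H}_{k_1}(\Omega)$ and $\mathcal{H}_{k_2}(\Omega)$; a merely abstract isometric isomorphism would not suffice. As an alternative I could instead start from the dual description $y^\top A^{-1} y = \min\{ \Vert f \Vert_{\mathcal{H}_k(\Omega)}^2 : f \in \mathcal{H}_k(\Omega),\ f(x_j) = y_j \text{ for all } j \}$ of the norm-minimal interpolant, use the norm equivalence on this common constraint set to obtain $C^{-2} A_1^{-1} \preceq A_2^{-1} \preceq C^2 A_1^{-1}$ in the Loewner order, and then apply operator monotonicity of $t \mapsto -1/t$ to invert; this works too, but it additionally presupposes that $A_1$ and $A_2$ are invertible, which is why I would favour the supremum formulation \eqref{eq:rayleighsup}.
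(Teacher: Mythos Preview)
Your argument is correct. The variational identity \eqref{eq:rayleighsup} is exactly the Riesz/Cauchy--Schwarz characterization of the squared norm of the Riesz representer $g_\alpha = \sum_j \alpha_j k(\cdot,x_j)$, and once that is in place the norm equivalence transfers immediately since, as you emphasize, the supremum is over the same set of functions with the same numerator in both spaces. The handling of the degenerate case $g_\alpha = 0$ is clean and avoids any positive-definiteness assumption.

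As for comparison: the paper does not give its own proof of this theorem; it quotes the result from \cite[Proposition~F.8]{haas2023mind} and moves on. Your supremum formulation is one of the two natural routes (the other being the interpolation-minimization route you sketch as an alternative), and your stated reason for preferring it---no need for invertibility of the kernel matrices---is sound. There is nothing to correct here.
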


The importance of \Cref{th:equivalent_kernels} is,
that it is possible to change the kernel to an equivalent kernel,
and the spectral properties of the kernel matrices still behave in an equivalent way.
This becomes especially apparent, 
if one recalls the min-max eigenvalue characterization (Courant-Fischer characterization)
\begin{align}
\label{eq:courant_fisher}
\lambda_i(A_1) = \min_{V_i \subset \R^d} \max_{0 \neq \alpha \in V_i} \frac{\alpha^\top A_1 \alpha}{\Vert \alpha \Vert^2},
\end{align}
where the minimum is taken over all $i$-dimensional subspaces $V_i \subset \R^d$.

One of the results of the present paper, 
namely \Cref{th:main_th},
extends \Cref{th:equivalent_kernels} from norm-equivalent kernels to kernels of different smoothnesses.
This smoothness will be characterized in terms of the decay rate $\tau$ of the Fourier transform as in Eq.~\eqref{eq:fourier_decay}.

Due to \Cref{th:equivalent_kernels}, for simplicity we frequently focus on the case 
\begin{align*}
\hat{\Phi}(\omega) = (1+\Vert \omega \Vert^2)^{-\tau},
\end{align*}
i.e.\ equality in Eq.~\eqref{eq:fourier_decay}.
It is clear that the results in the following generalize in a straightforward way to the slightly more general case given in Eq.~\eqref{eq:fourier_decay},
and we include the corresponding statements as corollaries.

\begin{table}
\caption{Overview of translational invariant Matérn kernels $k(x, z) = \Phi(x-z)$ used for the numerical experiments.}
\label{tab:overview_matern}
\begin{center}
\begin{tabular}{ |c|c|c|c } 
 \hline
			 			& $\Phi(x)$ 				& $\hat{\Phi}(\omega)$ \\ \hline
 Basic Matérn 			& $\exp(-\Vert x \Vert)$ 			& $\propto (1+\Vert \omega \Vert^2)^{-\frac{d+1}{2}}$ \\ 
 Linear Matérn 			& $(1+\Vert x \Vert)\exp(-\Vert x \Vert)$ 		& $\propto (1+\Vert \omega \Vert^2)^{-\frac{d+3}{2}}$ \\ 
 Quadratic Matérn		& $(3+3\Vert x \Vert+\Vert x \Vert^2)\exp(-\Vert x \Vert)$ 	& $\propto (1+\Vert \omega \Vert^2)^{-\frac{d+5}{2}}$ \\ \hline
\end{tabular}
\end{center}
\end{table}

\section{Estimates on symbol functions}
\label{sec:symbol_funcs}

\subsection{Refined multivariate Ingham-type theorem}
\label{subsec:refined_multivariate}

We start by stating and proving a multivariate Ingham-type inequality \cite[Chapter 8]{komornik2005fourier}.
Compared to the results in the literature, we track the dependence on the size $R$ of the considered ball $B_R(x_0)$ more explicitly:

\begin{theorem}
\label{th:ingham_upper_refined}
Let $X \subset \R^d$ be a finite set of pairwise distinct points. 
Let $\lambda_{\min}(-\Delta)$ be the smallest eigenvalue of the Dirichlet Laplace operator on the $d$-dimensional ball $B_{1/2}(0) \subset \R^d$.

Then for any $R \geq \frac{c_0}{q_{X}} := \sqrt{2}\lambda_{\min}^{1/2}(-\Delta) q_X^{-1}$
and any $x_0 \in \R^d, \alpha = (\alpha_j)_{j=1}^{|X|} \in \R^{|X|}$ it holds 
\begin{align*}
c_1 R^d \cdot \Vert \alpha \Vert^2
\leq 
\int_{B_R(x_0)} \left| \sum_{j=1}^{|X|} \alpha_j e^{i\omega^\top x_j} \right|^2 ~ \mathrm{d}\omega 
\leq 
c_2 \cdot R^d \cdot \Vert \alpha \Vert^2
\end{align*}
with constants $c_1, c_2 > 0$ specified in Eq.~\eqref{eq:constants_c1_c2} depending only on $d \in \N$.
The second inequality already holds for $R \geq \frac{\pi}{q_X}$.

\end{theorem}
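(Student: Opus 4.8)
The plan is to reduce both inequalities to a rescaling of the classical multivariate Ingham inequality on a ball of fixed radius, carefully tracking how the constants scale with $R$. Write $S(\omega) = \sum_{j=1}^{|X|} \alpha_j e^{i\omega^\top x_j}$, the trigonometric polynomial whose $L^2$-mass over $B_R(x_0)$ we want to compare to $\|\alpha\|^2$. First I would handle the translation: the substitution $\omega \mapsto \omega + x_0$ only changes each $\alpha_j$ to $\alpha_j e^{i x_0^\top x_j}$, which is a unimodular factor leaving $\|\alpha\|^2$ unchanged, so it suffices to treat the case $x_0 = 0$, i.e.\ integration over $B_R(0)$. Next I would rescale: substituting $\omega = R^{-1} \cdot \text{(something)}$ is not quite the right move since the $x_j$ live at a fixed scale; instead I would absorb $R$ into the points. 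Writing $\omega = \eta$ with $\eta$ ranging over $B_R(0)$, the natural move is $\eta = R \zeta$ with $\zeta \in B_1(0)$, giving $\mathrm{d}\omega = R^d \,\mathrm{d}\zeta$ and $S(R\zeta) = \sum_j \alpha_j e^{i \zeta^\top (R x_j)}$. So the integral equals $R^d \int_{B_1(0)} |\sum_j \alpha_j e^{i\zeta^\top y_j}|^2 \,\mathrm{d}\zeta$ where $y_j = R x_j$, and the rescaled point set $Y = RX$ has separation distance $q_Y = R \, q_X$.

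This reduces everything to proving: for a point set $Y$ with separation $q_Y \geq c_0 := \sqrt{2}\lambda_{\min}^{1/2}(-\Delta)$ (lower bound) resp.\ $q_Y \geq \pi$ (upper bound), one has $c_1 \|\alpha\|^2 \le \int_{B_1(0)} |\sum_j \alpha_j e^{i\zeta^\top y_j}|^2 \mathrm{d}\zeta \le c_2 \|\alpha\|^2$ with $c_1, c_2$ depending only on $d$. The upper bound is the easier and more classical direction: it follows from an almost-orthogonality / Bessel-type argument — expanding the square gives $\sum_{j,k} \alpha_j \alpha_k \int_{B_1(0)} e^{i\zeta^\top(y_j - y_k)}\mathrm{d}\zeta$, the diagonal contributes $\mathrm{vol}(B_1) \|\alpha\|^2$, and the off-diagonal terms are controlled because $\widehat{\mathbf{1}_{B_1}}(y_j - y_k)$ decays and the $y_j - y_k$ are $q_Y$-separated, so a standard summation over a separated set (comparing to an integral / counting lattice points in shells) bounds the cross terms once $q_Y$ exceeds the relevant threshold $\pi$; Schur's test then gives the operator-norm bound. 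For the lower bound I would invoke the Ingham machinery proper: choose a nonnegative weight function $h$ supported in $B_1(0)$ (or in $B_{1/2}$, to exploit the half-radius appearing in the hypothesis) whose Fourier transform $\hat h$ is nonnegative, has $\hat h(0) > 0$, and vanishes outside the ball of radius $q_Y \ge c_0$ in frequency — the Dirichlet eigenfunction of $-\Delta$ on $B_{1/2}$ is exactly the classical optimal choice, which is where $\lambda_{\min}(-\Delta)$ enters and fixes $c_0 = \sqrt 2\,\lambda_{\min}^{1/2}(-\Delta)$. Then $\int_{B_1(0)} |S|^2 \ge \int h |S|^2 = \sum_{j,k}\alpha_j\alpha_k \hat h(y_j - y_k)$, and by the support condition on $\hat h$ only the diagonal survives, yielding $\hat h(0)\|\alpha\|^2$ from below.

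The main obstacle I anticipate is pinning down the threshold constants precisely — in particular verifying that $c_0 = \sqrt 2 \,\lambda_{\min}^{1/2}(-\Delta)$ and $\pi$ are the correct (and dimension-uniform, or at least cleanly $d$-dependent) thresholds, which requires being careful about the relation between the ball of radius $1/2$ on which the Dirichlet Laplacian lives and the unit ball of integration after rescaling, and about factors of $2$ and $\pi$ in the Fourier-transform normalization used in Eq.~\eqref{eq:kernel_vector_product}. The second, more technical obstacle is making the off-diagonal summation in the upper bound fully rigorous and tracking the resulting $c_2$ explicitly as a function of $d$ — this is the place where one actually has to estimate $\sum_{0 \ne m \in \mathbb{Z}^d \text{(rescaled)}} |\hat{\mathbf 1}_{B_1}(q_Y m)|$ or its analogue for an arbitrary separated set, and get a bound that does not blow up. Once the reduced statement on $B_1(0)$ is established with explicit $c_1 = \hat h(0)$ (from the Ingham weight) and $c_2$ (from Schur's test), reversing the rescaling immediately gives the factors $R^d$ in front, completing the theorem; I would collect the explicit formulas for $c_1, c_2$ into the displayed Eq.~\eqref{eq:constants_c1_c2} referenced in the statement.
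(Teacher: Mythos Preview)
Your reduction to $x_0=0$ and the rescaling $\omega = R\zeta$ with $y_j = Rx_j$ are perfectly fine and give a cleaner route to the $R^d$ factor than the paper's approach, which instead keeps $R$ inside the construction by working with rescaled auxiliary functions $\tilde H(x)=R^{d/2}H(Rx)$ throughout. For the upper bound your Schur-test idea (expand the square and sum the Bessel-function decay of $\widehat{\mathbf 1_{B_1}}$ over a separated set) is workable but considerably more laborious than what the paper does: it builds a \emph{compactly supported} weight $G_2=H\ast H$ with $\hat G_2=(2\pi)^{d/2}h^2\ge 0$ bounded below on $B_\pi$, so that after passing to physical space the off-diagonal terms $G_2(x_i-x_j)$ vanish \emph{identically} once $q_X\ge 1/R$ and no summation is needed.

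The lower-bound plan, however, has a genuine gap. You ask for a weight $h$ that is nonnegative, supported in $B_1$, with $\hat h\ge 0$, $\hat h(0)>0$, and $\hat h$ \emph{supported} in $B_{q_Y}$ so that only the diagonal of $\sum_{j,k}\alpha_j\alpha_k\hat h(y_j-y_k)$ survives. Such an $h$ does not exist: a nonzero compactly supported function has an entire Fourier transform (Paley--Wiener), which cannot vanish on an open set. The Dirichlet eigenfunction $H$ on $B_{1/2}$ is indeed the right building block, but not used the way you describe. The paper's mechanism is a \emph{sign} condition rather than a support condition on the Fourier side: one takes $G_1=(\tilde r^2+\Delta)(H\ast H)$, still compactly supported in physical space so the bilinear sum collapses to $G_1(0)\|\alpha\|^2$, while its Fourier transform $g_1(\omega)=(\tilde r^2-\|\omega\|^2)(2\pi)^{d/2}h(\omega)^2$ is $\le 0$ outside $B_{\tilde r}$. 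This lets one drop the exterior part of $\int_{\R^d} g_1|S|^2$ without any uncertainty-principle obstruction, and the choice $\tilde r^2=2R^2\lambda_{\min}(-\Delta)$ is precisely what produces $c_0=\sqrt 2\,\lambda_{\min}^{1/2}(-\Delta)$. Your outline would need to be reworked along these lines; the rest of your plan (rescaling, handling of constants) then goes through.
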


The proof is provided in the appendix, see \Cref{sec:proof_ingham}. 
The eigenvalues of the Dirichlet Laplace operator on a $d$-dimensional ball can be characterized with help of zeroes of Bessel functions \cite[Theorem 3.61]{frank2022schrodinger},
which are well researched \cite[Chapter 15]{watson1922treatise}.
Thus we obtain for the smallest eigenvalue $\lambda_{\min}(-\Delta)$ of the Dirichlet Laplace operator on $B_{1/2}(0) \subset \R^d$ 
the values $\lambda_{\min}(-\Delta) = \pi^2 \approx 9.869$ for $d=1$, $\lambda_{\min}(-\Delta) \approx 23.132$ for $d=2$, $\lambda_{\min}(-\Delta) = 4 \cdot \pi^2 \approx 39.478$ for $d=3$ and $\lambda_{\min}(-\Delta) \approx 58.727$ for $d=4$.

For general $d \geq 3$ we have the asymptotic
\begin{align}
\label{eq:estimate_lambda_min2}
d^2-4 < \lambda_{\min}(-\Delta) < 2d(d+4).
\end{align}
Also the constants $c_1$ and $c_2$ can be evaluated explicitly,
and one obtains for $d=1$ the values $c_1 = \frac{1}{2\sqrt{\pi}} \approx 0.159$ and $c_2 = \frac{4}{\pi} \approx 1.273$.

\subsection{A simple proof for a lower bound on $\lambda_{\min}(A)$}
\label{subsec:simple_proof}

In the following we provide a simple proof for lower bounding the smallest eigenvalue of kernel matrices.
In fact, \Cref{th:lower_bound_easy} derives the same bounds as known in the literature (see e.g.~\cite[Theorem 12.3]{wendland2005scattered}),
however with a simple and straightforward proof based on \Cref{th:ingham_upper_refined}.

\begin{theorem}
\label{th:lower_bound_easy}
Let $k(x, z) = \Phi(x - z)$ be a translational invariant kernel with $\Phi \in L^1(\R^d)$.
Given pairwise distinct points $X \subset \R^d$,
the smallest eigenvalue of the kernel matrix $A = k(X, X) \in \R^{|X| \times |X|}$
can be bounded below as
\begin{align}
\label{eq:lower_bound_easy}
\lambda_{\min}(A) \geq \frac{c_0^d c_1}{(2\pi)^{d/2}} \cdot \min_{\omega \in B_{c_0 / q_X}(0)} \hat{\Phi}(\omega)  q_X^{-d}
\end{align}
with constants $c_0, c_1 > 0$ only depending on $d$.
\end{theorem}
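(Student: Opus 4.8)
The plan is to combine the Fourier representation of the Rayleigh quotient in Eq.~\eqref{eq:kernel_vector_product} with the lower bound from the refined Ingham-type theorem (\Cref{th:ingham_upper_refined}). First I would start from the identity
\begin{align*}
\alpha^\top A \alpha = (2\pi)^{-d/2} \int_{\R^d} \hat{\Phi}(\omega) \left| \sum_{j=1}^{|X|} \alpha_j e^{i\omega^\top x_j} \right|^2 \mathrm{d}\omega,
\end{align*}
which is valid since $\Phi \in L^1(\R^d)$ ensures $\hat{\Phi}$ is well-defined and continuous, and note that $\hat{\Phi} \geq 0$ as $\Phi$ generates a positive definite kernel. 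The key step is then to restrict the integration domain: since the integrand is nonnegative, we may throw away everything outside the ball $B_{c_0/q_X}(0)$ and bound
\begin{align*}
\alpha^\top A \alpha \geq (2\pi)^{-d/2} \int_{B_{c_0/q_X}(0)} \hat{\Phi}(\omega) \left| \sum_{j=1}^{|X|} \alpha_j e^{i\omega^\top x_j} \right|^2 \mathrm{d}\omega \geq (2\pi)^{-d/2} \left( \min_{\omega \in B_{c_0/q_X}(0)} \hat{\Phi}(\omega) \right) \int_{B_{c_0/q_X}(0)} \left| \sum_{j=1}^{|X|} \alpha_j e^{i\omega^\top x_j} \right|^2 \mathrm{d}\omega.
\end{align*}

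Next I would apply the lower bound of \Cref{th:ingham_upper_refined} with the choice $R = c_0/q_X$ and $x_0 = 0$; this is exactly the borderline radius $R = c_0/q_X$ for which the lower Ingham inequality is stated, so it applies and yields
\begin{align*}
\int_{B_{c_0/q_X}(0)} \left| \sum_{j=1}^{|X|} \alpha_j e^{i\omega^\top x_j} \right|^2 \mathrm{d}\omega \geq c_1 \left( \frac{c_0}{q_X} \right)^d \Vert \alpha \Vert^2 = c_1 c_0^d q_X^{-d} \Vert \alpha \Vert^2.
\end{align*}
Combining the two displays gives $\alpha^\top A \alpha \geq (2\pi)^{-d/2} c_0^d c_1 \left( \min_{\omega \in B_{c_0/q_X}(0)} \hat{\Phi}(\omega) \right) q_X^{-d} \Vert \alpha \Vert^2$ for all $\alpha \in \R^{|X|}$, and dividing by $\Vert \alpha \Vert^2$ and taking the infimum over $\alpha \neq 0$ identifies the left-hand side as $\lambda_{\min}(A)$, which is precisely Eq.~\eqref{eq:lower_bound_easy}.

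There is no serious obstacle here; the argument is essentially bookkeeping once \Cref{th:ingham_upper_refined} is in hand. The only points that need a word of care are: verifying that the hypotheses of \Cref{th:ingham_upper_refined} are met at the exact radius $R = c_0/q_X$ (the theorem is stated for $R \geq c_0/q_X$, so equality is admissible); confirming that $\min_{\omega \in B_{c_0/q_X}(0)} \hat{\Phi}(\omega)$ is actually attained, which follows from continuity of $\hat{\Phi}$ on the compact closed ball (and if one only has $\hat{\Phi}$ defined almost everywhere, replacing the minimum by an essential infimum); and tracking that the constant $c_0$ appearing as the ball radius and as the exponent base is the same $c_0 = \sqrt{2}\,\lambda_{\min}^{1/2}(-\Delta)$ from \Cref{th:ingham_upper_refined}, so that the stated constant $c_0^d c_1 / (2\pi)^{d/2}$ depends only on $d$. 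I would also remark that this recovers \Cref{th:stability_statement}: under Eq.~\eqref{eq:fourier_decay} one has $\min_{\omega \in B_{c_0/q_X}(0)} \hat{\Phi}(\omega) \geq c_\Phi (1 + (c_0/q_X)^2)^{-\tau} \gtrsim q_X^{2\tau}$ for small $q_X$, which when multiplied by $q_X^{-d}$ reproduces the $q_X^{2\tau - d}$ rate.
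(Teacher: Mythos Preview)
Your proposal is correct and follows essentially the same route as the paper: restrict the Fourier integral in Eq.~\eqref{eq:kernel_vector_product} to the ball $B_{c_0/q_X}(0)$, pull out $\min_{\omega \in B_{c_0/q_X}(0)}\hat{\Phi}(\omega)$, apply the lower Ingham bound of \Cref{th:ingham_upper_refined} at the borderline radius $R=c_0/q_X$, and conclude via the Rayleigh characterization. Your additional remarks on continuity of $\hat{\Phi}$, admissibility of the borderline radius, and recovery of the $q_X^{2\tau-d}$ rate are all valid and go slightly beyond what the paper spells out.
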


\begin{proof}
We make use of \Cref{th:ingham_upper_refined} and pick $R:= \frac{c_0}{q_X}$.
We express the vector matrix vector product $\alpha^\top A \alpha$ with help of the Fourier transform 
according to Eq.~\eqref{eq:kernel_vector_product} and estimate in a straightforward way
\begin{align*}
\alpha^\top A \alpha &= (2\pi)^{-d/2} \cdot \int_{\R^d} \hat{\Phi}(\omega) \cdot \left \vert \sum_{j=1}^{|X|} \alpha_j e^{i \omega^\top x_j} \right \vert^2 ~ \mathrm{d}\omega \\
&\geq (2\pi)^{-d/2} \cdot \int_{B_R} \hat{\Phi}(\omega) \cdot \left \vert \sum_{j=1}^{|X|} \alpha_j e^{i \omega^\top x_j} \right \vert^2 ~ \mathrm{d}\omega \\
&\geq (2\pi)^{-d/2} \cdot \min_{\omega \in B_R(0)} \hat{\Phi}(\omega) \cdot \int_{B_R}  \left \vert \sum_{j=1}^{|X|} \alpha_j e^{i \omega^\top x_j} \right \vert^2 ~ \mathrm{d}\omega \\
&\geq (2\pi)^{-d/2} \cdot c_1 \min_{\omega \in B_R(0)} \hat{\Phi}(\omega)  R^d \cdot \Vert \alpha \Vert^2.
\end{align*}
Thus, the Rayleigh characterization of eigenvalues immediately yields
\begin{align*}
\lambda_{\min}(A) = \min_{0 \neq \alpha \in \R^{|X|}} \frac{\alpha^\top A \alpha}{\Vert \alpha \Vert^2}
\geq \frac{c_1}{(2\pi)^{d/2}} \cdot \min_{\omega \in B_R(0)} \hat{\Phi}(\omega)  R^d.
\end{align*}
Recalling the choice of $R$ as $R \equiv \frac{c_0}{q_X}$ gives the result.
\end{proof}

The simplicity of the proof of \Cref{th:lower_bound_easy} relies on the explicit availability of the dependency in $R$ within \Cref{th:ingham_upper_refined},
and thus highlights the importance of the refinement provided in \Cref{th:ingham_upper_refined} over the literature.

By inserting the Fourier transform for specific translational invariant kernels $k$, 
we obtain precise bounds:

\begin{cor}
\label{cor:minimal_eigenvalue_bound}
Given pairwise distinct points $X \subset \R^d$,
the smallest eigenvalue of the kernel matrix $A = k(X, X) \in \R^{|X| \times |X|}$ for any kernel satisfying Eq.~\eqref{eq:fourier_decay} can be bounded below  as
\begin{align}
\label{eq:lambda_min_sobolev2}
\lambda_{\min}(A) ~\geq~ \frac{c_0^d c_1 c_\Phi}{(2\pi)^{d/2}} \cdot  (q_X^2+c_0^2)^{-\tau} \cdot q_X^{2\tau-d}.
\end{align}
\end{cor}

We conclude this subsection with two remarks.
First, if $X \subset \Omega \subset \R^d$ for $\Omega$ bounded, 
we have $q_X$ bounded and thus Eq.~\eqref{eq:lambda_min_sobolev2} simplifies to Eq.~\eqref{eq:lambda_min_sobolev1},
i.e.\ we obtain the same result as in \Cref{th:stability_statement} with a more simple proof.
As the case $\Omega \subset \R^d$ bounded appears often, 
we will assume $\Omega$ to be bounded in the following theorems,
such that we can make use of the simplified bound as in Eq.~\eqref{eq:lambda_min_sobolev1}.
We note that for the case of uniformly distributed points $X \subset \Omega$, 
there is a corresponding matching upper bound, see e.g.\ \cite[Proposition 4.1]{santin2024optimality}.

We continue with a remark on the Gaussian kernel.

\begin{rem}
In the setting of \Cref{cor:minimal_eigenvalue_bound},
but inserting the Fourier transform for the Gaussian kernel $k(x, z) = \exp(-\gamma \Vert x - z \Vert^2)$, $\gamma > 0$ gives
\begin{align}
\label{eq:lambda_min_gaussian2}
\lambda_{\min}(A) ~\geq~ \frac{c_0^d c_1}{(2\pi)^{d/2}} \cdot \left( \frac{\pi}{\gamma} \right)^{d/2} \cdot q_X^{-d} \exp \left(- \frac{c_0^2}{4\gamma} \cdot q_X^{-2} \right)
\end{align}
Using $c_0 \equiv \sqrt{2}\lambda_{\min}^{1/2}(-\Delta)$ from \Cref{th:ingham_upper_refined} and using the estimates on $\lambda_{\min}^{1/2}(-\Delta)$ from Eq.~\eqref{eq:estimate_lambda_min2} (for $d \geq 3$) we have 
\begin{align*}
2(d^2-4) < c_0^2 < 4d(d+4)
\end{align*}
Like this we can further estimate Eq.~\eqref{eq:lambda_min_gaussian2} as
\begin{align*}
\lambda_{\min}(A) \geq \frac{c_0^d c_1}{(2\pi)^{d/2}} \cdot \left( \frac{\pi}{\gamma} \right)^{d/2} \cdot q_X^{-d} \exp \left(-\frac{d(d+4)}{\gamma} \cdot q_X^{-2} \right),
\end{align*}
which is better than the bound given in \cite[Corollary 12.4]{wendland2005scattered} and close to the specialized bound in \cite[Eq.~(14)]{diederichs2019improved}.
Following \cite[Remark~2.5]{diederichs2019improved} and tweaking some constants (e.g.\ $c_0 \equiv \sqrt{2} \lambda_{\min}^{1/2}(-\Delta)$ can be reduced to $(1+\varepsilon) \lambda_{\min}^{1/2}(-\Delta)$ for any $\varepsilon > 0$ 
by the choice $\tilde{r}^2 = (1+\varepsilon)R^2 \lambda_{\min}(-\Delta)$ in the proof of \Cref{th:ingham_upper_refined}) we can recover the rate presented in \cite[Eq.~(14)]{diederichs2019improved}.
\end{rem}

All in all, the fact that we recover the sharp results from the literature in \Cref{th:lower_bound_easy} and \Cref{cor:minimal_eigenvalue_bound} shows the usefulness of the approach via the refined multivariate Ingham-type theorem as given in \Cref{th:ingham_upper_refined}.

\section{Novel and refined stability estimates}
\label{sec:novel_refined_estimates}

\subsection{Localization estimate}

The following \Cref{th:localization_integral} shows,
that the largest contribution of the integrals from Eq.~\eqref{eq:kernel_vector_product} is localized in a (large) ball $B_{2R} := B_{2R}(0)$ around the origin.
We exemplify this for the case of Sobolev kernels, i.e.\ the case where $\hat{\Phi}(\omega)$ satisfies Eq.~\eqref{eq:fourier_decay}.
The theorem is formulated for $\hat{\Phi}(\omega) = (1+\Vert \omega \Vert^2)^{-\tau}$ for some $\tau > d/2$,
while the extension to the equivalence as in Eq.~\eqref{eq:fourier_decay} is trivial.

\begin{theorem}
\label{th:localization_integral}
Let $\Omega \subset \R^d$ be bounded.
For $\tau > d/2$ and any $\varepsilon \in (0, 1)$, 
there exists a constant $a := a_\varepsilon > 0$ only depending on $\varepsilon, d, \tau, \Omega$,
such that for any $X \subset \Omega \subset \R^d$ of finitely many pairwise distinct points 
and any $\alpha = (\alpha_j)_{j=1}^{|X|} \in \R^d$
we have for any $R \geq \frac{\max(a, \pi)}{q_{X}}$:
\begin{align*}
(1-\varepsilon) \int_{\R^d} (1+\Vert \omega \Vert^2)^{-\tau} \left \vert \sum_{j=1}^{|X|} \alpha_j e^{i \omega^\top x_j} \right \vert^2 \mathrm{d}\omega
\leq
\int_{B_{2R}} (1+\Vert \omega \Vert^2)^{-\tau} \left \vert \sum_{j=1}^{|X|} \alpha_j e^{i \omega^\top x_j} \right \vert^2 \mathrm{d}\omega.
\end{align*}
\end{theorem}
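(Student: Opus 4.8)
The plan is to reduce the claim to a tail estimate and then play the lower bound of \Cref{th:ingham_upper_refined} (on a small ball) against its upper bound (on large dyadic shells). Writing $f(\omega) := \left\vert \sum_{j=1}^{|X|} \alpha_j e^{i\omega^\top x_j}\right\vert^2 \geq 0$ and splitting $\int_{\R^d} = \int_{B_{2R}} + \int_{\R^d \setminus B_{2R}}$, the assertion is equivalent to
$$\int_{\R^d \setminus B_{2R}} (1+\Vert\omega\Vert^2)^{-\tau} f(\omega)\,\mathrm{d}\omega \;\leq\; \varepsilon \int_{\R^d} (1+\Vert\omega\Vert^2)^{-\tau} f(\omega)\,\mathrm{d}\omega.$$
I would prove this by bounding the right-hand integral from below by a multiple of $q_X^{2\tau-d}\Vert\alpha\Vert^2$ and the left-hand tail from above by a multiple of $R^{d-2\tau}\Vert\alpha\Vert^2$; comparing the exponents in $q_X$ and $R$ then forces a threshold of the form $R \geq a/q_X$. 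Crucially, both bounds come from \Cref{th:ingham_upper_refined}, whose constants $c_1, c_2$ depend only on $d$, which is exactly what keeps the resulting $a$ independent of $X$.

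For the lower bound I would restrict the full integral to the ball $B_{c_0/q_X}(0)$ (allowed since the integrand is nonnegative), use that $(1+\Vert\omega\Vert^2)^{-\tau}$ is radially decreasing to pull out its value $(1+c_0^2/q_X^2)^{-\tau}$ at radius $c_0/q_X$, and then apply the lower bound in \Cref{th:ingham_upper_refined} with the choice $R = c_0/q_X$ (its hypothesis $R\geq c_0/q_X$ being met with equality). This yields $\int_{\R^d}(1+\Vert\omega\Vert^2)^{-\tau}f \geq c_1 c_0^d (q_X^2+c_0^2)^{-\tau} q_X^{2\tau-d}\Vert\alpha\Vert^2$. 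Since $\Omega$ is bounded we have $q_X \leq \operatorname{diam}(\Omega)$, so $(q_X^2+c_0^2)^{-\tau}$ is bounded below by a positive constant depending only on $d,\tau,\Omega$, and we obtain $\int_{\R^d}(1+\Vert\omega\Vert^2)^{-\tau}f \geq c_\star\, q_X^{2\tau-d}\Vert\alpha\Vert^2$.

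For the tail I would decompose $\R^d\setminus B_{2R}$ into dyadic shells $A_k := B_{2^{k+1}R}\setminus B_{2^kR}$, $k\geq 1$. On $A_k$ the weight is at most $(2^kR)^{-2\tau}$, and $\int_{A_k} f \leq \int_{B_{2^{k+1}R}} f \leq c_2 (2^{k+1}R)^d\Vert\alpha\Vert^2$ by the upper bound in \Cref{th:ingham_upper_refined}, applicable because $2^{k+1}R \geq 2R \geq \pi/q_X$ (this is where $R\geq \pi/q_X$ is used). Summing over $k\geq 1$ produces a geometric series with ratio $2^{d-2\tau}<1$ — which converges precisely because $\tau > d/2$ — giving a tail bound $c_{\mathrm{tail}}\, R^{d-2\tau}\Vert\alpha\Vert^2$ with $c_{\mathrm{tail}}$ depending only on $d,\tau$. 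Finally, $c_{\mathrm{tail}}R^{d-2\tau}\Vert\alpha\Vert^2 \leq \varepsilon c_\star q_X^{2\tau-d}\Vert\alpha\Vert^2$ holds exactly when $(Rq_X)^{2\tau-d} \geq c_{\mathrm{tail}}/(\varepsilon c_\star)$, i.e.\ for $R \geq a/q_X$ with $a := (c_{\mathrm{tail}}/(\varepsilon c_\star))^{1/(2\tau-d)}$; combined with the requirement $R \geq \pi/q_X$ from the tail estimate this is the stated condition $R \geq \max(a,\pi)/q_X$. I do not expect an essential analytic obstacle here; the only genuine care needed is bookkeeping — verifying that every invocation of \Cref{th:ingham_upper_refined} occurs at a radius above its threshold, and that all constants are tracked to depend only on $d,\tau,\varepsilon,\Omega$ and never on $X$.
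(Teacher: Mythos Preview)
Your argument is correct. The reduction to a tail estimate, the lower bound via restriction to $B_{c_0/q_X}$ together with the lower inequality of \Cref{th:ingham_upper_refined} (this is precisely the computation behind \Cref{th:lower_bound_easy} and \Cref{cor:minimal_eigenvalue_bound}), and the dyadic shell bound using the upper inequality of \Cref{th:ingham_upper_refined} all go through as you describe; the geometric series converges exactly because $\tau>d/2$, and the constants collected depend only on $d,\tau,\varepsilon,\Omega$.

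The paper itself does not give a self-contained proof here: it simply cites \cite[Lemma~3.3]{narcowich2006sobolev} for the case $\varepsilon=\tfrac14$ and declares the extension to general $\varepsilon\in(0,1)$ straightforward. Your write-up therefore supplies more than the paper does, and in fact it is very much in the spirit of \Cref{subsec:simple_proof}, where the paper advertises that the refined Ingham estimate \Cref{th:ingham_upper_refined} yields short direct proofs of such results. The dyadic-shell tail argument you use is the standard mechanism behind the cited lemma, so there is no genuine methodological divergence---you have essentially reconstructed the proof that the paper chose to outsource.
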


\begin{proof}
The statement for $\varepsilon = \frac{1}{4}$ can be found in \cite[Lemma 3.3]{narcowich2006sobolev},
and the generalization to $\varepsilon \in (0, 1)$ is straightforward.
\end{proof}

We remark that it is possible to derive results like \Cref{th:localization_integral} also for kernels of infinite smoothness such as the Gaussian kernels.
As we focus on finitely smooth kernels in this work, we do not include the details here.

\subsection{Spectral alignment of kernel matrices} 
\label{subsec:spectral_alignment}

In this subsection, we focus on kernels of finite smoothness.
In particular we start by considering a translational invariant
\begin{align*}
\text{kernel $k$ that satisfies \quad} \hat{\Phi}(\omega) = (1+\Vert \omega \Vert^2)^{-\tau} ~ \forall \omega \in \R^d
\end{align*}
as well as another translational invariant 
\begin{align*}
\text{kernel $k^{(\sigma)}$ that satisfies \quad} \hat{\Phi}^{(\sigma)}(\omega) = (1+\Vert \omega \Vert^2)^{-\sigma \tau} ~ \forall \omega \in \R^d
\end{align*}
for some $\sigma \in (\frac{d}{2\tau}, 1)$.
Possibly up to some norm equivalences as in Eq.~\eqref{eq:fourier_decay} and discussed in \Cref{subsec:norm_equiv_kernels}, 
this corresponds to the use of two kernels of finite but different smoothness.
We consider the following example:

\begin{example}
\label{ex:matern_kernels}
Let $k$ be the linear Matérn kernel, which is translational invariant with $\Phi_{\text{lin}}(x) = (1+\Vert x \Vert)\exp(-\Vert x \Vert)$ which satisfies $\hat{\Phi}(\omega) \doteq (1+\Vert \omega \Vert^2)^{-\tau_{\text{lin}}}$ ($\doteq$ means equality up to a constant) with $\tau_{\text{lin}} = \frac{d+3}{2}$.
We choose $\sigma = \frac{d+1}{d+3}$, i.e.\ $\sigma \tau_\text{lin} = \frac{d+1}{2}$.
This smoothness corresponds to the basic Matérn kernel, which is translational invariant with $\Phi_{\text{basic}}(x) = \exp(-\Vert x \Vert)$ and $\hat{\Phi}_\text{basic}(\omega) \doteq (1 + \Vert \omega \Vert^2)^{-\tau_\text{basic}}$ with $\tau_\text{basic} = \frac{d+1}{2} = \sigma \tau_\text{lin}$. 
Thus for this choice of $\sigma$, $k^{(\sigma)}$ is the basic Matérn kernel.
\end{example}

Enabled by \Cref{th:ingham_upper_refined} and \Cref{th:localization_integral}, we can now derive the following main result \Cref{th:main_th}, which relates the Rayleigh quotients for kernel matrices for $k$ and $k^{(\sigma)}$.
It can be understood as a statement on the spectral behaviour of the kernel matrices $A := k(X, X)$ and $A^{(\sigma)} := k^{(\sigma)}(X, X)$ and as a generalization of \Cref{th:equivalent_kernels} from norm-equivalent kernels to kernels of different smoothness:

\begin{theorem}[Main result]
\label{th:main_th}
Let $\Omega \subset \R^d$ and consider any finitely many pairwise distinct points $X \subset \Omega$ and let $\tau > d/2$, $\sigma \in (\frac{d}{2\tau}, 1]$.
Consider the kernel matrices $A = k(X, X)$ and $A^{(\sigma)} = k^{(\sigma)}(X, X)$ of the kernels $k$ and $k^{(\sigma)}$ as specified before.
Then it holds for all $0 \neq \alpha \in \R^{|X|}$

\begin{align}
\label{eq:main_result}
\frac{\alpha^\top A \alpha}{\Vert \alpha \Vert^2} ~ &\leq ~
\frac{\alpha^\top A^{(\sigma)} \alpha}{\Vert \alpha \Vert^2} 
\leq ~ c_\sigma \left( q_X^{2\tau} + c_0^2 \right)^{(1-\sigma)\tau} q_X^{-(1-\sigma)2\tau} \cdot \frac{\alpha^\top A \alpha}{\Vert \alpha \Vert^2}
\end{align}
with a constant $c_\sigma > 0$ (only depending on $\sigma$, $d$, $\tau$, $\Omega$) as defined in Eq.~\eqref{eq:constant_sigma}.
For $\Omega \subset \R^d$ bounded, this simplifies to
\begin{align}
\label{eq:main_result2}
\frac{\alpha^\top A \alpha}{\Vert \alpha \Vert^2} ~ &\leq ~
\frac{\alpha^\top A^{(\sigma)} \alpha}{\Vert \alpha \Vert^2} 
\leq ~ c_\sigma' q_X^{-(1-\sigma)2\tau} \cdot \frac{\alpha^\top A \alpha}{\Vert \alpha \Vert^2} \qquad \forall 0 \neq \alpha \in \R^{|X|}.
\end{align}
\end{theorem}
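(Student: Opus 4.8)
The plan is to express both Rayleigh quotients via the Fourier representation \eqref{eq:kernel_vector_product}, so that
$\alpha^\top A \alpha = (2\pi)^{-d/2} \int_{\R^d} (1+\Vert\omega\Vert^2)^{-\tau} \, |P_\alpha(\omega)|^2 \, \mathrm{d}\omega$
and
$\alpha^\top A^{(\sigma)} \alpha = (2\pi)^{-d/2} \int_{\R^d} (1+\Vert\omega\Vert^2)^{-\sigma\tau} \, |P_\alpha(\omega)|^2 \, \mathrm{d}\omega$,
where I write $P_\alpha(\omega) := \sum_{j=1}^{|X|} \alpha_j e^{i\omega^\top x_j}$. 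Since $\sigma \le 1$ we have $(1+\Vert\omega\Vert^2)^{-\sigma\tau} \ge (1+\Vert\omega\Vert^2)^{-\tau}$ pointwise, which immediately gives the left inequality $\alpha^\top A\alpha \le \alpha^\top A^{(\sigma)}\alpha$. The content is the right inequality, and the idea is: on the large ball $B_{2R}$ the weight $(1+\Vert\omega\Vert^2)^{-\sigma\tau}$ exceeds $(1+\Vert\omega\Vert^2)^{-\tau}$ by at most the factor $(1+\Vert\omega\Vert^2)^{(1-\sigma)\tau} \le (1+4R^2)^{(1-\sigma)\tau}$, while the tail outside $B_{2R}$ is controlled by the localization estimate \Cref{th:localization_integral}.

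Concretely, I would fix $\varepsilon \in (0,1)$ (to be optimized, or just taken as $\tfrac12$), let $a = a_\varepsilon$ be the constant from \Cref{th:localization_integral}, and set $R := \frac{\max(a,\pi,c_0)}{q_X}$ so that both \Cref{th:ingham_upper_refined} and \Cref{th:localization_integral} apply. Then split
\begin{align*}
\int_{\R^d} (1+\Vert\omega\Vert^2)^{-\sigma\tau} |P_\alpha(\omega)|^2 \mathrm{d}\omega
= \int_{B_{2R}} (\cdots) \mathrm{d}\omega + \int_{\R^d \setminus B_{2R}} (\cdots) \mathrm{d}\omega.
\end{align*}
On $B_{2R}$ bound $(1+\Vert\omega\Vert^2)^{-\sigma\tau} \le (1+4R^2)^{(1-\sigma)\tau} (1+\Vert\omega\Vert^2)^{-\tau}$, so this piece is at most $(1+4R^2)^{(1-\sigma)\tau}$ times $\int_{B_{2R}} (1+\Vert\omega\Vert^2)^{-\tau} |P_\alpha|^2 \le (2\pi)^{d/2}\alpha^\top A\alpha$. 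For the tail, since $\sigma\tau > d/2$ the kernel $k^{(\sigma)}$ also satisfies the hypotheses of \Cref{th:localization_integral} (with $\tau$ replaced by $\sigma\tau$), so $\int_{\R^d\setminus B_{2R}} (1+\Vert\omega\Vert^2)^{-\sigma\tau}|P_\alpha|^2 \le \frac{\varepsilon}{1-\varepsilon}\int_{B_{2R}} (1+\Vert\omega\Vert^2)^{-\sigma\tau}|P_\alpha|^2$, which feeds back into the $B_{2R}$ bound; rearranging gives $\alpha^\top A^{(\sigma)}\alpha \le \frac{1}{1-\varepsilon}(1+4R^2)^{(1-\sigma)\tau} \alpha^\top A\alpha$. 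Inserting $R = \frac{c_0}{q_X}$ (after enlarging $c_0$ to $\max(a,\pi,c_0)$, which only improves constants) turns $1+4R^2$ into $\asymp (q_X^2 + c_0^2)/q_X^2$, and raising to the power $(1-\sigma)\tau$ — here I need $(q_X^{2\tau}+c_0^2)^{(1-\sigma)\tau}$, so a small manipulation relating $(1+4c_0^2 q_X^{-2})^{\tau}$ to $(q_X^{2\tau}+c_0^2)$ up to constants is needed, which is routine — produces exactly the stated factor $c_\sigma (q_X^{2\tau}+c_0^2)^{(1-\sigma)\tau} q_X^{-(1-\sigma)2\tau}$, with $c_\sigma$ absorbing $\frac{1}{1-\varepsilon}$, powers of $4$ and $c_0$, and the Ingham constant $c_1$. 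The bounded-$\Omega$ simplification \eqref{eq:main_result2} follows since then $q_X$ is bounded, so $(q_X^{2\tau}+c_0^2)^{(1-\sigma)\tau}$ is bounded by a constant.

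The main obstacle is the bookkeeping around the localization step: I must verify that \Cref{th:localization_integral} genuinely applies to $k^{(\sigma)}$ (the constant $a_\varepsilon$ there depends on $\tau$, so for $k^{(\sigma)}$ one uses $a_{\varepsilon}$ evaluated at $\sigma\tau$, and the threshold $R \ge \max(a,\pi)/q_X$ must accommodate both $\tau$ and $\sigma\tau$ — since $\sigma\tau \in (d/2, \tau)$ this is fine by taking the larger constant), and that the case $\sigma = 1$ is handled trivially (then $A^{(\sigma)} = A$ and the factor is $\ge 1$). Everything else — the pointwise weight comparisons, the split of the integral, and the final algebraic rewriting of the constant — is elementary. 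I do not expect any genuinely hard estimate; the refinement in \Cref{th:ingham_upper_refined} (explicit $R$-dependence) and the $\varepsilon$-version of \Cref{th:localization_integral} are precisely what make the argument go through cleanly.
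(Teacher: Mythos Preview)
Your argument is correct and in fact more elementary than the paper's. Both proofs obtain the lower bound by the trivial pointwise comparison and both begin the upper bound by applying the localization estimate \Cref{th:localization_integral} to $k^{(\sigma)}$. From there the routes diverge. The paper splits $|P_\alpha|^2 = |P_\alpha|^{2\sigma}\,|P_\alpha|^{2(1-\sigma)}$ and applies H\"older's inequality with exponents $(1/\sigma,\,1/(1-\sigma))$, bounds the unweighted factor $\int_{B_{2R}}|P_\alpha|^2$ via the Ingham upper bound of \Cref{th:ingham_upper_refined}, and finally converts the resulting $(\alpha^\top A\alpha)^\sigma$ into $\alpha^\top A\alpha$ by invoking the $\lambda_{\min}$ lower bound of \Cref{cor:minimal_eigenvalue_bound}. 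Your route bypasses all three of these ingredients: once the integral is localized to $B_{2R}$ you simply use the pointwise estimate $(1+\Vert\omega\Vert^2)^{-\sigma\tau}\le (1+4R^2)^{(1-\sigma)\tau}(1+\Vert\omega\Vert^2)^{-\tau}$ and are done. This is strictly simpler and gives the same $q_X$-dependence; the only price is that your constant $c_\sigma$ differs from the paper's explicit formula~\eqref{eq:constant_sigma}, and your threshold for $R$ must use the $a_\varepsilon$ corresponding to $\sigma\tau$ rather than $\tau$ (which you correctly flag). Two small remarks: you do not actually need the Ingham constant $c_1$ or the condition $R\ge c_0/q_X$, so you can drop those from your bookkeeping; and the factor $(q_X^{2\tau}+c_0^2)^{(1-\sigma)\tau}$ in the statement appears to be a typo for $(q_X^{2}+c_0^2)^{(1-\sigma)\tau}$ (cf.\ \Cref{cor:minimal_eigenvalue_bound} and the limit discussion after the theorem), so the ``small manipulation'' you worried about is not needed --- your $(1+4R^2)^{(1-\sigma)\tau}$ already matches the intended form directly.
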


\begin{proof}
For $\sigma = 1$ the statement is trivial as $k = k^{(\sigma)}$ and thus $A = A^{(\sigma)}$.
For $\sigma \in (\frac{d}{2\tau}, 1)$ we make use of the representation of the expressions $\alpha^\top A \alpha$ and $\alpha^\top A^{(\sigma)} \alpha$ in Fourier space according to Eq.~\eqref{eq:kernel_vector_product}:

The lower bound is straightforward:
\begin{align*}
\alpha^\top A^{(\sigma)} \alpha &= (2\pi)^{-d/2} \cdot \int_{\R^d} (1+\Vert \omega \Vert^2)^{-\sigma \tau} \cdot \left \vert \sum_{j=1}^{|X|} \alpha_j e^{i \omega^\top x_j} \right \vert^2 ~ \mathrm{d}\omega \\
&\geq (2\pi)^{-d/2} \cdot \int_{\R^d} (1+\Vert \omega \Vert^2)^{-\tau} \cdot \left \vert \sum_{j=1}^{|X|} \alpha_j e^{i \omega^\top x_j} \right \vert^2 ~ \mathrm{d}\omega \\
&= \alpha^\top A \alpha.
\end{align*}
The upper bound is more sophisticated:
We use \Cref{th:localization_integral} for $R = \frac{\max(a, \pi)}{q_{X}}$ and $\varepsilon = \frac{1}{2}$ and subsequently apply Hölder inequality with exponents $p = \frac{1}{\sigma} \in (1, \infty)$, $q = \frac{1}{1-\sigma} \in (1, \infty)$:
\begin{align*}
& ~ (2\pi)^{d/2} \cdot \alpha^\top A^{(\sigma)} \alpha \\
=& ~ \int_{\R^d} \left( 1+\Vert \omega \Vert^2 \right)^{-\sigma\tau} \cdot \left| \sum_{j=1}^{|X|} \alpha_j e^{i\omega^\top x_j} \right|^2 ~ \mathrm{d}\omega \\
\leq& ~ 2 \cdot \int_{B_{2R}} \left( 1+\Vert \omega \Vert^2 \right)^{-\sigma\tau} \cdot \left| \sum_{j=1}^{|X|} \alpha_j e^{i\omega^\top x_j} \right|^2 ~ \mathrm{d}\omega \\
=& ~ 2 \cdot \int_{B_{2R}} \left( 1+\Vert \omega \Vert^2 \right)^{-\sigma\tau} \cdot \left| \sum_{j=1}^{|X|} \alpha_j e^{i\omega^\top x_j} \right|^{2\sigma} \cdot \left| \sum_{j=1}^{|X|} \alpha_j e^{i\omega^\top x_j} \right|^{2-2\sigma} ~ \mathrm{d}\omega \\
\leq& ~ 2 \cdot \left( \int_{B_{2R}} \left( 1+\Vert \omega \Vert^2 \right)^{-\tau} \cdot \left| \sum_{j=1}^{|X|} \alpha_j e^{i\omega^\top x_j} \right|^{2} ~ \mathrm{d}\omega \right)^{\sigma} \cdot \left( \int_{B_{2R}} \left| \sum_{j=1}^{|X|} \alpha_j e^{i\omega^\top x_j} \right|^2 ~ \mathrm{d}\omega \right)^{1-\sigma} \\
\leq& ~ 2 \cdot \left( \int_{\R^d} \left( 1+\Vert \omega \Vert^2 \right)^{-\tau} \cdot \left| \sum_{j=1}^{|X|} \alpha_j e^{i\omega^\top x_j} \right|^{2} ~ \mathrm{d}\omega \right)^{\sigma} \cdot \left( c_2 2^dR^d \Vert \alpha \Vert^2 \right)^{1-\sigma} \\
=& ~ 2c_2^{1-\sigma} \cdot \left( \int_{\R^d} \left( 1+\Vert \omega \Vert^2 \right)^{-\tau} \cdot \left| \sum_{j=1}^{|X|} \alpha_j e^{i\omega^\top x_j} \right|^{2} ~ \mathrm{d}\omega \right)^{\sigma} \cdot \Vert \alpha \Vert^{2(1-\sigma)} \cdot R^{d(1-\sigma)} \\
=& ~ 2^{1+d(1-\sigma)}c_2^{1-\sigma} (2\pi)^{\sigma \cdot d/2} \left( \alpha^\top A \alpha \right)^{\sigma} \cdot \Vert \alpha \Vert^{2(1-\sigma)} \cdot R^{d(1-\sigma)}.
\end{align*}
where \Cref{th:ingham_upper_refined} was additionaly used for the last inequality.
By recalling that $R = \frac{\max(a, \pi)}{q_{X}}$,
we obtain
\begin{align*}
(2\pi)^{d/2} \cdot \alpha^\top A^{(\sigma)} \alpha \leq& ~ 2^{1+d(1-\sigma)}c_2^{1-\sigma} (2\pi)^{\sigma \cdot d/2} \max(a, \pi)^{d(1-\sigma)} q_{X}^{-d(1-\sigma)} (\alpha^\top A \alpha)^{\sigma} \cdot \Vert \alpha \Vert^{2(1-\sigma)} \\ 
\Leftrightarrow \quad \frac{\alpha^\top A^{(\sigma)} \alpha}{\Vert \alpha \Vert^2} \leq& ~ 2^{1+d(1-\sigma)}c_2^{1-\sigma} (2\pi)^{-(1-\sigma)\cdot d/2} \max(a, \pi)^{d(1-\sigma)} \cdot q_{X}^{-d(1-\sigma)} \left( \frac{\alpha^\top A \alpha}{\Vert \alpha \Vert^2} \right)^{\sigma}.
\end{align*}
Finally we use Eq.~\eqref{eq:lambda_min_sobolev2} to further estimate 
\begin{align*}
\left( \frac{\alpha^\top A \alpha}{\Vert \alpha \Vert^2} \right)^{\sigma} &= \left( \frac{\alpha^\top A \alpha}{\Vert \alpha \Vert^2} \right) \cdot \left( \frac{\alpha^\top A \alpha}{\Vert \alpha \Vert^2} \right)^{\sigma-1} \\
&\leq \left( \frac{\alpha^\top A \alpha}{\Vert \alpha \Vert^2} \right) \cdot \left( \frac{c_0^d c_1}{(2\pi)^{d/2}} \cdot \left( q_X^{2\tau} + c_0^2 \right)^{-\tau} q_X^{2\tau - d} \right)^{\sigma-1}.
\end{align*}
Thus altogether this results in
\begin{align*}
\frac{\alpha^\top A^{(\sigma)} \alpha}{\Vert \alpha \Vert^2} \leq c_\sigma \left( q_X^{2\tau} + c_0^2 \right)^{\tau(1-\sigma)} q_X^{-(1-\sigma)2\tau} \cdot \frac{\alpha^\top A \alpha}{\Vert \alpha \Vert^2}
\end{align*}
using the constant 
\begin{align}
\label{eq:constant_sigma}
c_\sigma := 2^{1+d(1-\sigma)} c_2^{1-\sigma} (2\pi)^{-(1-\sigma)\cdot d/2} \max(a, \pi)^{d(1-\sigma)} \left(\frac{c_0^d c_1}{(2\pi)^{d/2}} \right)^{-(1-\sigma)}.
\end{align}
If $\Omega \subset \R^d$ is additionally bounded, 
then $q_X$ is bounded as well,
and one obtains the simplification by the choice $c_\sigma' := c_\sigma \cdot \sup_{X \subset \Omega} \left( q_X^{2\tau} + c_0^2 \right)^{(1-\sigma)\tau}$.
\end{proof}

On the one hand, \Cref{th:main_th} can be seen as a generalization of \Cref{th:equivalent_kernels} from norm-equivalent kernels to kernels of different smoothness.
In fact, when using the limit value $\sigma = 1$ in \Cref{th:main_th}, 
we obtain the same result as \Cref{th:equivalent_kernels} (up to constants due to norm-equivalences).
On the other hand, \Cref{th:main_th} can also be understood as a deterministic counterpart to \cite[Theorem 3.2]{geifman2024controlling},
which considers so-called modified spectrum kernels in a probabalistic setting.
We remark that a generalization of \Cref{th:main_th} to conditionally positive definite kernels such as polyharmonic splines \cite{duchon1977splines} is feasible,
as they can also be treated with help of the (generalized) Fourier transform.
We did not include this significantly more technical discussion here to keep the outline more simple,
and point to \cite{hangelbroek2025extending} for related work on the spectra of conditionally positive definite kernels.

Within \Cref{th:main_th},
we also included the case of $\Omega$ being unbounded.
In this case, the point $X$ can be far apart, such that $q_X \rightarrow \infty$,
which therefore implies $\left( q_X^{2\tau} + c_0^2 \right)^{(1-\sigma)\tau} q_X^{-(1-\sigma)2\tau} \rightarrow 1$.
This makes sense, because the matrices $A$ and $A^{(\sigma)}$ become diagonal dominant.
In the following, we will thus focus on the more interesting case of $\Omega$ being bounded.

Finally we comment on a spectral alignment phenomenon.
We consider points $X$ such that their separation distance $q_X$ is not too small,
which is frequently the case in kernel based approximation when considering well distributed points.
Then let $\alpha$ be an eigenvector to a small eigenvalue of the matrix $A$.
Then the upper bound in Eq.~\eqref{eq:main_result} shows that this vector $\alpha$ provides also a small value for the Rayleigh quotient $\frac{\alpha^\top A^{(\sigma)} \alpha}{\Vert \alpha \Vert^2}$ of the matrix $A^{(\sigma)}$.
On the other side, an eigenvector to a large eigenvalue of $A^{(\sigma)}$ can never result in a small value of the Rayleigh quotient $\frac{\alpha^\top A \alpha}{\Vert \alpha \Vert^2}$ of the matrix $A$.
Also the lower bound of Eq.~\eqref{eq:main_result} allows for similar implications:
If $\alpha$ is an eigenvector to a large eigenvalue of the matrix $A$,
then the lower bound in Eq.~\eqref{eq:main_result} shows that the Rayleigh quotient of the matrix $A^{(\sigma)}$ will also be large.
However, note that these implications do not necessarily imply that eigenvectors for the matrices $A$ and $A^{(\sigma)}$ need to be similar.
This becomes obvious, if one considers a case of duplicate eigenvalues, where there is an eigenspace of dimension larger than one.
Numerical experiments on this behaviour are presented in \Cref{sec:numerical_experiments}. %

We conclude with two corollaries.
First, \Cref{cor:extension_to_norm_equiv_kernels} extends \Cref{th:main_th} to norm-equivalent kernels,
i.e.\ we only assume to be norm-equivalent to kernels that satisfy $\hat{\Phi}(\omega) = (1+\Vert \omega \Vert^2)^{-\tau}$.
This allows us to apply the result also to e.g.\ Wendland kernels, 
as examplified subsequently in \Cref{ex:wendland_kernel}.

\begin{cor}
\label{cor:extension_to_norm_equiv_kernels}
Let the assumptions of \Cref{th:main_th} hold, 
and suppose in addition that $\Omega$ is bounded.
Let $\tilde{k}, \tilde{k}^{(\sigma)}$ be norm-equivalent to $k$ respective $k^{(\sigma)}$, i.e.\
\begin{align*}
C^{-1} \cdot \Vert \cdot \Vert_{\mathcal{H}_k(\Omega)} &\leq \Vert \cdot \Vert_{\mathcal{H}_{\tilde{k}}(\Omega)} \leq C \Vert \cdot \Vert_{\mathcal{H}_k(\Omega)}, \\
C_1^{-1} \cdot \Vert \cdot \Vert_{\mathcal{H}_{k^{(\sigma)}}(\Omega)} &\leq \Vert \cdot \Vert_{\mathcal{H}_{\tilde{k}^{(\sigma)}}(\Omega)} \leq C_1 \Vert \cdot \Vert_{\mathcal{H}_{k^{(\sigma)}}(\Omega)},
\end{align*}
Consider the kernel matrices $\tilde{A} = \tilde{k}(X, X)$ and $\tilde{A}^{(\sigma)} = \tilde{k}^{(\sigma)}(X, X)$.
Then it holds 
\begin{align*}
C_1^{-2} C^{-2} \cdot \frac{\alpha^\top \tilde{A} \alpha}{\Vert \alpha \Vert^2} ~ &\leq ~
\frac{\alpha^\top \tilde{A}^{(\sigma)} \alpha}{\Vert \alpha \Vert^2} 
\leq ~ C_1^2 C^2 \cdot c_\sigma' q_X^{-(1-\sigma)2\tau} \cdot \frac{\alpha^\top \tilde{A} \alpha}{\Vert \alpha \Vert^2} \qquad \forall 0 \neq \alpha \in \R^{|X|}.
\end{align*}
\end{cor}

\begin{proof}
This follow immediately from \Cref{th:main_th} by taking into account the norm-equivalences and \Cref{th:equivalent_kernels}.
\end{proof}

Note that the kernel $k$ and $k^{(\sigma)}$ within \Cref{cor:extension_to_norm_equiv_kernels} do not necessarily need to be translational invariant anymore,
only the norm-equivalence is assumed.
We conclude with the following example. 
It extends \Cref{ex:matern_kernels} to norm-equivalent kernels such as the family of Wendland kernels.

\begin{example}
\label{ex:wendland_kernel}
As in \Cref{ex:matern_kernels}, let $k$ be the linear Matérn kernel, 
i.e.\ $\hat{\Phi}(\omega) \stackrel{\cdot}{=} (1 + \Vert \omega \Vert^2)^{-\tau_{\text{lin}}}$ with $\tau_{\text{lin}} = \frac{d+3}{2}$.
Recall that the Wendland kernels $\Phi_{d, k}$ of smoothness $k \in \N$ in dimension $d \in \N$ exhibits an asymptotic Fourier decay as $\hat{\Phi}(\omega) \asymp (1+\Vert \omega \Vert^2)^{-\frac{d}{2}-k-\frac{1}{2}}$ ($d\geq 3$ if $k=0$) \cite[Theorem 10.35]{wendland2005scattered}.
In view of \Cref{cor:extension_to_norm_equiv_kernels} and using $d=3$,
the choice of $\sigma = 2/3$ yields $\sigma \cdot \frac{d+3}{2} = 2 \stackrel{!}{=} -\frac{d}{2} - k - \frac{1}{2}$, i.e.\ $k=0$.
Thus for this choice of $d$ and $\sigma$, $\tilde{k}^{(\sigma)}$ is the Wendland kernel $\Phi_{3,0}$.
\end{example}

As a second corollary we provide a worst case bound on the ratio $\frac{\alpha^\top \tilde{A}^{(\sigma)} \alpha}{\alpha^\top \tilde{A} \alpha}$.
This can be understood as a stability result,
as it immediately yields an upper bound on the condition number $\mathrm{cond}(\tilde{A}^{-1/2} \tilde{A}^{(\sigma)} \tilde{A}^{-1/2})$, 
see \Cref{rem:condition_number}.
\begin{cor}
\label{cor:worst_case_bound}
Under the assumptions of \Cref{cor:extension_to_norm_equiv_kernels}
it holds the worst case bound
\begin{align*}
C_1^{-2} C^{-2} \leq \min_{0 \neq \alpha \in \R^{|X|}} \frac{\alpha^\top \tilde{A}^{(\sigma)} \alpha}{\alpha^\top \tilde{A} \alpha} \leq \max_{0 \neq \alpha \in \R^{|X|}} \frac{\alpha^\top \tilde{A}^{(\sigma)} \alpha}{\alpha^\top \tilde{A} \alpha} \leq&
~ C_1^2 C^2 c_\sigma' q_{X}^{-(1-\sigma)2\tau}.
\end{align*}
Furthermore the ordered eigenvalues $\lambda_1(\tilde{A}) \leq ... \leq \lambda_{|X|}(\tilde{A})$ of $\tilde{A}$ and the eigenvalues $\lambda_1(\tilde{A}^{(\sigma)}) \leq ... \leq \lambda_{|X|}(\tilde{A}^{(\sigma)})$ of $\tilde{A}^{(\sigma)}$ satisfy the bounds
\begin{align*}
C_1^{-2} C^{-2} \lambda_i(\tilde{A}) ~\leq~ \lambda_i(\tilde{A}^{(\sigma)}) ~\leq~ c_\sigma' q_{X}^{-(1-\sigma)2\tau} \cdot \lambda_i(\tilde{A}).
\end{align*}
with $C \equiv c_2 a^d c_0^{-1}$ and thus only depending on $d, \tau$ and $\Omega$.
\end{cor}

\begin{proof}
The first part follows from rearranging the result of \Cref{cor:extension_to_norm_equiv_kernels},
while the second part follows in the same way by additionally applying the Courant-Fischer characterization of Eq.~\eqref{eq:courant_fisher}.
\end{proof}

\noindent It is worth to observe that the straightforward estimate 
\begin{align*}
\max_{\alpha \in \R^{|X|}} \frac{\alpha^\top \tilde{A}^{(\sigma)} \alpha}{\alpha^\top \tilde{A} \alpha} \leq \lambda_{\max}(\tilde{A}^{(\sigma)}) / \lambda_{\min}(\tilde{A})
\end{align*}
results in a scaling as $q_X^{-d} \cdot q_X^{d - 2\tau} = q_X^{-2\tau}$,
which is significantly worse than the estimate obtained in \Cref{cor:worst_case_bound}.

We conclude with the following remark:
\begin{rem}
\label{rem:condition_number}
The bounds of \Cref{cor:worst_case_bound} directly imply bounds on condition numbers as
\begin{align*}
\mathrm{cond}(\tilde{A}^{-1/2} \tilde{A}^{(\sigma)} \tilde{A}^{-1/2}) = \mathrm{cond}((\tilde{A}^{(\sigma)})^{-1/2} \tilde{A} (\tilde{A}^{(\sigma)})^{-1/2}) \leq C_1^4 C^4 c_\sigma' q_X^{-(1-\sigma)2\tau}.
\end{align*}
Thus, especially for $\sigma \approx 1$, 
the condition numbers of $\tilde{A}^{-1/2} \tilde{A}^{(\sigma)} \tilde{A}^{-1/2}$ as well as $(\tilde{A}^{(\sigma)})^{-1/2} \tilde{A} (\tilde{A}^{(\sigma)})^{-1/2}$ are significantly smaller than those of $\tilde{A}$, $\tilde{A}^{(\sigma)}$.
Such a property was the idea behind the numerical studies of \cite{egidi2023preconditioning} within a related setting for preconditioning purposes.
In a related setting, the use of such matrices was investigated numerically in \cite{egidi2023preconditioning} for preconditioning purposes.
\end{rem}

Finally we remark that it is possible to derive analogous results also for kernels of infinite smoothness such as the Gaussian kernels.
For the case of the Gaussian kernel with $\Phi(x) = \exp(-\gamma \Vert x \Vert^2)$ and shape parameter $\gamma > 0$,
the Fourier transform is given as $\hat{\Phi}(\omega) = (\pi/\gamma)^{d/2} \cdot \exp(-\Vert \omega \Vert^2 / (4\gamma))$.
Thus, different decay rates of the Fourier transform correspond to the use of different shape parameters of the Gaussian kernel.
Hence, such results seem interesting for investigating properties of the Gaussian kernel with respect to its shape parameter $\gamma$.
As we focus on finitely smooth kernels in this work, we leave out further details here.

\section{Numerical results on spectral alignment of kernel matrices}
\label{sec:numerical_experiments}

In this section we present numerical results revolving around the main result \Cref{th:main_th}.
For enhanced reproducibility of the numerical experiments,
we provide the code to rerun these experiments.\footnote{\url{https://gitlab.rrz.uni-hamburg.de/bbd9097/paper-2024-stability-estimates-kernel-matrices}}

As elaborated in \Cref{subsec:spectral_alignment}, 
\Cref{th:main_th} provides estimates on the Rayleigh quotients for kernel matrices for kernels of different smoothness.
As an example, the eigenvector to the smallest eigenvalue of the matrix $A$ also results in a small value when used in the Rayleigh quotient of the matrix $A^{(\sigma)}$.
On the other hand, 
the eigenvector to the largest eigenvalue of the matrix $A$ also results in a large value when used as vector in the Rayleigh quotient of the matrix $A^{(\sigma)}$.

In order to illustrate this behaviour, we consider the quadratic Matérn kernel and the basic Matérn kernel.
In view of Eq.~\eqref{eq:fourier_decay}, 
and depending on the dimension $d$,
the quadratic Matérn kernel has a smoothness of $\tau_{\text{quad}} = \frac{d+5}{2}$ and the basic Matérn kernel has a smoothness of $\tau_{\text{basic}} = \frac{d+1}{2}$.
Thus this choice of kernels correspond to a value of $\sigma$ as $\sigma = \frac{\tau_\text{basic}}{\tau_\text{quad}} = \frac{d+1}{d+5}$.
We consider two cases, namely $\Omega = [0, 1] \subset \R$ as well as $\Omega = [0, 1]^3 \subset \R^3$ and use 20 uniformly randomly sampled points in $\Omega$, 
such that $A, A^{(\sigma)} \in \R^{20 \times 20}$ in both cases.
For these two kernels, 
we consider the eigenvectors $\{ v_i \}_{i=1}^{20} \subset \R^{20}$ of the matrix $A$ (for the quadratic Matérn kernel) 
and the eigenvectors $\{ v_i^{(\sigma)} \}_{i=1}^{20} \subset \R^{20}$ of the matrix $A^{(\sigma)}$ (for the basic Matérn kernel).
The eigenvectors refer to the sorted eigenvalues, 
i.e.\ $\lambda_1 \leq ... \leq \lambda_{20}$ and $\lambda_1^{(\sigma)} \leq ... \leq \lambda_{20}^{(\sigma)}$.
We compare these eigenvectors in terms of the squared dot products 
\begin{align}
\label{eq:squared_dot_products}
|\langle v_i, v^{(\sigma)}_j \rangle|^2, ~~ i, j = 1, ..., 20.
\end{align}
Note that $\sum_{i=1}^{20} |\langle v_i, v^{(\sigma)}_j \rangle|^2 = \sum_{j=1}^{20} |\langle v_i, v^{(\sigma)}_j \rangle|^2 = 1$ due to Parseval's theorem.

The results are depicted as heatmap plots in \Cref{fig:spectral_alignment}:
The left plot refers to the $d=1$ case, while the right plots refers to the $d=3$ case.
One can clearly see that the values of the squared dot products from Eq.~\eqref{eq:squared_dot_products} are largest close to the diagonal.
This is explained by the two-sided inequality of Eq.~\eqref{eq:main_result}.

\begin{figure}[t]
\begin{center}
\setlength\fwidth{.47\textwidth}
\begin{tikzpicture}

\definecolor{darkgray176}{RGB}{176,176,176}

\begin{axis}[
width=1.0\fwidth,
height=1.0\fwidth,
point meta max=1, %
point meta min=0, %
tick align=outside,
tick pos=left,
x grid style={darkgray176},
xmin=-0.5, xmax=19.5,
xtick style={color=black},
y dir=reverse,
y grid style={darkgray176},
ymin=-0.5, ymax=19.5,
ytick style={color=black},
ylabel={index $i$},
xlabel={index $j$}
]
\addplot graphics [includegraphics cmd=\pgfimage,xmin=-0.5, xmax=19.5, ymin=19.5, ymax=-0.5] {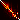};
\end{axis}

\end{tikzpicture}
\begin{tikzpicture}

\definecolor{darkgray176}{RGB}{176,176,176}

\begin{axis}[
width=1.0\fwidth,
height=1.0\fwidth,
colorbar,
colorbar style={ylabel={}},
colormap={mymap}{[1pt]
  rgb(0pt)=(0.0416,0,0);
  rgb(365pt)=(1,0,0);
  rgb(746pt)=(1,1,0);
  rgb(1000pt)=(1,1,1)
},
point meta max=1, %
point meta min=0, %
tick align=outside,
tick pos=left,
x grid style={darkgray176},
xmin=-0.5, xmax=19.5,
xtick style={color=black},
y dir=reverse,
y grid style={darkgray176},
ymin=-0.5, ymax=19.5,
ytick style={color=black},
xlabel={index $j$}
]
\addplot graphics [includegraphics cmd=\pgfimage,xmin=-0.5, xmax=19.5, ymin=19.5, ymax=-0.5] {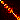};
\end{axis}

\end{tikzpicture}
\end{center}
\caption{Heatmap visualization of the squared dot products $|\langle v_i, v^{(\sigma)}_j \rangle|^2$ for the eigenvectors $v_i, i=1, ..., n$ ($y$-axis) of $A$ and $v_j$ of $A^{(\sigma)}$, $j=1, ..., n$ ($x$-axis).
Left plot refers to $\Omega = [0, 1]$, while right plot refers to $[0, 1]^3$. \\
It is easy to see that the largest values of $|\langle v_i, v^{(\sigma)}_j \rangle|^2$ are obtained close to the diagonal, i.e.\ for $i \approx j$.}
\label{fig:spectral_alignment}
\end{figure}

\section{Conclusion and outlook}
\label{sec:conclusion_outlook}

This paper refined a multivariate Ingham-type theorem,
which was then leveraged to prove novel stability estimates for kernel matrices.
In particular, a two-sided bound on the Rayleigh quotients of kernel matrices for finitely smooth kernels of different smoothness were derived.
These bounds allowed to understand a spectral alignment phenomenon between the eigenvectors of kernel matrices for kernels of different smoothness.

For future research,
we aim to apply these results to better understand the interplay of kernels of different smoothness, 
their corresponding RKHS as well as corresponding direct and inverse statements.
Furthermore, we want to extend the results to kernels of infinite smoothness and use these results for an improved understanding on infinitely smooth kernels.

\textbf{Acknowledgements:}
The authors acknowledge financial support through the projects LD-SODA of the {\em Landesforschungsf\"orderung Hamburg} (LFF)
and support from the RTG~2583 ``Modeling, Simulation and Optimization of Fluid Dynamic Applications''
funded by the {\em Deutsche Forschungsgemeinschaft} (DFG).
We thank Joshua Lampert for proofreading and the reviewers for valuable feedback.

\bibliography{/home/wenzel/references}				%
\bibliographystyle{abbrv}

\appendix

\appendix

\section{Proof of \Cref{th:ingham_upper_refined}}
\label{sec:proof_ingham}

To prove \Cref{th:ingham_upper_refined}, 
we follow the proof from \cite[Chapter 8]{komornik2005fourier}, 
however closely tracking the $R$-dependence and the constants involved.

\begin{proof}[Proof of \Cref{th:ingham_upper_refined}]
We consider $x_0 = 0$, 
as the general case $x_0 \neq 0$ is obtained by observing
\begin{align*}
\int_{B_R(x_0)} \left| \sum_{j=1}^{|X|} \alpha_j e^{i\omega^\top x_j} \right|^2 ~ \mathrm{d}\omega 
=& ~ \int_{B_R(0)} \left| \sum_{j=1}^{|X|} \alpha_j e^{i x_0^\top x_j } e^{i\omega^\top x_j} \right|^2 ~ \mathrm{d}\omega \\
\leq& ~ c_2 \cdot R^d \cdot \left \Vert (\alpha_j e^{ix_0^\top x_j})_{j=1}^{|X|} \right \Vert^2 \\
=& ~ c_2 \cdot R^d \cdot \Vert \alpha \Vert^2,
\end{align*}
and using the same calculation for the lower bound.

In the following, we occasionally write $B_R$ instead of $B_R(0)$. \\
For $x_0=0$, 
we start by proving the \underline{second inequality}:
\begin{itemize}
\item Let $H$ be the $\Vert \cdot \Vert_{L^2(B_{1/2}(0)}$ normalized eigenfunction of $-\Delta$ in $H_0^1(B_{\frac{1}{2}}(0)$ corresponding to the smallest eigenvalue.
As the Dirichlet Laplace operator $-\Delta$ is a positive operator, the eigenfunction $H$ can be chosen positive.
We extend $H$ to $\R^d$ by zero, still denoting it as $H$. 
It follows that $\tilde{H}(x) := R^{d/2} \cdot H(Rx)$ is the eigenfunction of $-\Delta$ in $H_0^1(B_{\frac{1}{2R}}(0)$ (likewise extended by zero to $\R^d$).
Let $h$ be the Fourier transform of $H$. 
Using the dilation property of the Fourier transform, we can calculate the Fourier transform $\tilde{h}$ of $\tilde{H}$ as
\begin{align}
\label{eq:h_positive}
\tilde{h}(\omega) = R^{-d/2} \cdot h(R^{-1} \omega).
\end{align}

For $\Vert \omega \Vert \leq \pi R$ and $\Vert x \Vert \leq \frac{1}{2R}$ we have $|\omega^\top x| \leq \frac{\pi}{2}$, i.e.\ $\cos(\omega^\top x) \geq 0$ and thus the integral is positive.
Thus we define
\begin{align}
\label{eq:definition_beta}
\beta := (2\pi)^{d/2} \cdot \left[ \min_{\omega \in B_\pi(0)} h(\omega) \right]^2 > 0.
\end{align}
\item We define $\tilde{G}_2 := \tilde{H} \ast \tilde{H} \in H_0^1(B_{\frac{1}{R}}(0))$ and obtain for its Fourier transform $\tilde{g}_2 = \mathcal{F}[\tilde{G}_2]$ via the convolution theorem
\begin{align*}
\tilde{g}_2 &= \mathcal{F}[\tilde{H} \ast \tilde{H}] = (2\pi)^{d/2} \cdot \mathcal{F}[\tilde{H}]^2 = (2\pi)^{d/2} \cdot \tilde{h}^2,
\end{align*}
i.e.\ $\tilde{g}_2$ is a non-negative function. 
We can express the minimal value of $\tilde{g}_2$ in $B_{\pi R}(0)$ with help of $\beta$:
\begin{align*}
\min_{\omega \in B_{\pi R}(0)} \tilde{g}_2(\omega) &= ~ (2\pi)^{d/2} \cdot \left[ \min_{\omega \in B_{\pi R}(0)} \tilde{h}(\omega) \right]^2 \\
&= ~ (2\pi)^{d/2} \cdot R^{-d} \left[ \min_{\omega \in B_{\pi}(0)} h(\omega) \right]^2 
= ~ R^{-d} \beta.
\end{align*}
\item With this we can finally compute for $q_{X} \geq \frac{1}{R}$:
\begin{align*}
R^{-d} \beta \cdot \int_{B_{\pi R}(0)}& \left| \sum_{j=1}^{|X|} \alpha_j e^{i\omega^\top x_j} \right|^2 ~ \mathrm{d}\omega 
\leq ~ \int_{B_{\pi R}(0)} \tilde{g}_2(\omega) \left| \sum_{j=1}^{|X|} \alpha_j e^{i\omega^\top x_j} \right|^2 ~ \mathrm{d}\omega \\
\leq& ~ \int_{\R^d} \tilde{g}_2(\omega) \left| \sum_{j=1}^{|X|} \alpha_j e^{i\omega^\top x_j} \right|^2 ~ \mathrm{d}\omega 
= ~ (2\pi)^{d/2} \cdot \sum_{i,j=1}^{|X|} \alpha_i \alpha_j \tilde{G}_2(x_i - x_j) \\
=& ~ (2\pi)^{d/2} \cdot \tilde{G}_2(0) \cdot \sum_{j=1}^{|X|} |\alpha_j|^2 
= ~ (2\pi)^{d/2} \cdot \tilde{G}_2(0) \cdot \Vert \alpha \Vert^2,
\end{align*}
which can be rearranged to
\begin{align}
\label{eq:almost_final_result}
\int_{B_{\pi R}(0)}& \left| \sum_{j=1}^{|X|} \alpha_j e^{i\omega^\top x_j} \right|^2 ~ \mathrm{d}\omega \leq (2\pi)^{d/2} \cdot
\frac{\tilde{G}_2(0)}{\beta} \cdot R^d \cdot \Vert \alpha \Vert^2.
\end{align}
The value of $\tilde{G}_2(0)$ is can be calculated explicitly by using the symmetry $H(x) = H(-x)$:
\begin{align}
\label{eq:Gnull_calculation}
\tilde{G}_2(0) &\equiv ~ \int_{\R^d} \tilde{H}(x) \tilde{H}(-x) ~ \mathrm{d}x 
= ~ \int_{B_{\frac{1}{2R}}} \tilde{H}(x) \tilde{H}(-x) ~ \mathrm{d}x \notag \\
&= ~ R^d \cdot \int_{B_{\frac{1}{2R}}} H(Rx) H(-Rx) ~ \mathrm{d}x
= ~ \int_{B_{\frac{1}{2}}} H(x) H(-x) ~ \mathrm{d}x \notag \\ %
&= ~ \int_{B_\frac{1}{2}} H(x)^2 ~ \mathrm{d}x = 1.
\end{align}
\item Replacing $\pi R$ by $R$ in Eq.~\eqref{eq:almost_final_result} implies also $q_X \geq \pi/R$, 
which then gives the final statement
\begin{align*}
\int_{B_{R}(0)}& \left| \sum_{j=1}^{|X|} \alpha_j e^{i\omega^\top x_j} \right|^2 ~ \mathrm{d}\omega \leq \frac{ 2^{d/2} }{ \pi^{d/2} \beta} \cdot R^d \cdot \Vert \alpha \Vert^2.
\end{align*}
\end{itemize}

\noindent We continue by proving the \underline{first inequality}:
\begin{itemize}
\item Let $H$ be again the eigenfunction of $-\Delta$ in $H_0^1(B_{\frac{1}{2}}(0)$ corresponding to the smallest eigenvalue $\lambda_{\min} := \lambda_{\min}(-\Delta) > 0$ and define again $\tilde{H}(x) := R^{d/2} \cdot H(Rx)$ with support in $B_{\frac{1}{2R}}(0)$.
This time we define 
\begin{align*}
G_1(x) &:= (r^2 + \Delta)(H \ast H)(x), ~
\tilde{G}_1(x) := (\tilde{r}^2 + \Delta)(\tilde{H} \ast \tilde{H})(x)
\end{align*}
for some $r, \tilde{r} > 0$ to be chosen.
It holds $\mathrm{supp}(G_1) \subseteq B_1(0)$ as well as $\mathrm{supp}(\tilde{G}_1) \subseteq B_{1/R}(0)$.
For the Fourier transform $g_1 := \mathcal{F}[G_1]$ and $\tilde{g}_1 := \mathcal{F}[\tilde{G}_1]$ we obtain by using standard properties
\begin{align*}
g_1(\omega) &= (r^2 - \Vert \omega \Vert^2) \cdot (2\pi)^{d/2} \cdot h(\omega)^2, \\
\tilde{g}_1(\omega) &= (\tilde{r}^2 - \Vert \omega \Vert^2) \cdot (2\pi)^{d/2} \cdot \tilde{h}(\omega)^2.
\end{align*}
From this we can conclude that $g_1(\omega) \leq 0$ for $\Vert \omega \Vert > r$ as well as $\tilde{g}_1(\omega) \leq 0$ for $\Vert \omega \Vert > \tilde{r}$.
From Eq.~\eqref{eq:h_positive} we recall $\tilde{h}(\omega) = R^{-d/2} \cdot h(R^{-1} \omega)$.

Furthermore, as $g_1$, $\tilde{g}_1$ are continuous, 
they are upper bounded on $B_r(0)$ respective $B_{\tilde{r}}(0)$, 
i.e.\ $\max_{\omega \in B_r(0)} g_1(\omega) < \infty$ and $\max_{\omega \in B_{\tilde{r}}(0)} \tilde{g}_1(\omega) < \infty$.

We can upper bound this maximum via
\begin{align*}
\max_{\omega \in B_r(0)} g_1(\omega) \leq r^2 \cdot (2\pi)^{d/2} \max_{\omega \in B_r(0)} h(\omega)^2, \\
\max_{\omega \in B_{\tilde{r}}(0)} \tilde{g}_1(\omega) \leq \tilde{r}^2 \cdot (2\pi)^{d/2} \max_{\omega \in B_{\tilde{r}}(0)} \tilde{h}(\omega)^2.
\end{align*}

Now we have for a point set $X$ with $q_X \geq 1/R$: 
\begin{align*}
&~(2\pi)^{d/2} \cdot \tilde{G}_1(0) \cdot \Vert \alpha \Vert^2 = (2\pi)^{d/2} \cdot \tilde{G}_1(0) \cdot \sum_{j=1}^{|X|} |\alpha_j|^2 \\
=&~ (2\pi)^{d/2} \cdot \sum_{i,j=1}^{|X|} \alpha_i \alpha_j \tilde{G}_1(x_i - x_j) 
= \int_{\R^d} \tilde{g}_1(\omega) \cdot \left| \sum_{j=1}^{|X|} \alpha_j e^{i \omega^\top x_j} \right|^2 ~ \mathrm{d}\omega \\
\leq&~ \int_{B_{\tilde{r}(0)}} \tilde{g}_1(\omega) \left| \sum_{j=1}^{|X|} \alpha_j e^{i \omega^\top x_j} \right|^2 \mathrm{d}\omega 
\leq \max_{\omega \in B_{\tilde{r}}(0)} \tilde{g}_1(\omega) \cdot \int_{B_{\tilde{r}}(0)} \left| \sum_{j=1}^{|X|} \alpha_j e^{i \omega^\top x_j} \right|^2 ~ \mathrm{d}\omega.
\end{align*}
This can be rearranged to
\begin{align}
\label{eq:intermediate_4}
\frac{(2\pi)^{d/2} \cdot \tilde{G}_1(0)}{\max_{\omega \in B_{\tilde{r}}(0)} \tilde{g}_1(\omega)} \cdot \Vert \alpha \Vert^2 \leq 
\int_{B_{\tilde{r}}(0)} \left| \sum_{j=1}^{|X|} \alpha_j e^{i \omega^\top x_j} \right|^2 ~ \mathrm{d}\omega.
\end{align}
For $\max_{\omega \in B_{\tilde{r}}(0)} \tilde{g}_1(\omega)$ we have due to Eq.~\eqref{eq:h_positive}
\begin{align*}
\max_{\omega \in B_{\tilde{r}}(0)} \tilde{g}_1(\omega) &\leq \tilde{r}^2 \cdot (2\pi)^{d/2} \max_{\omega \in B_{\tilde{r}}(0)} \tilde{h}(\omega)^2 \\
&= \tilde{r}^2 \cdot (2\pi)^{d/2} \max_{\omega \in B_{\tilde{r}}(0)} R^{-d} h(R^{-1}\omega)^2 \\
&= \tilde{r}^2 R^{-d} \cdot (2\pi)^{d/2} \max_{\omega \in B_{\tilde{r}/R}(0)} h(\omega)^2 \\
&\leq \tilde{r}^2 R^{-d} \cdot (2\pi)^{d/2} \max_{\omega \in \R^d} h(\omega)^2 \\
&\leq \tilde{r}^2 R^{-d} \cdot (2\pi)^{d/2} (2\pi)^{-d/2} \Vert H \Vert_{L^1(\R^d)}^2 \\
&\leq \tilde{r}^2 R^{-d} \cdot \sqrt{|B_{1/2}(0)|} \Vert H \Vert_{L^2(\R^d)}^2 \\
&\leq \tilde{r}^2 R^{-d}.
\end{align*}
To determine $\tilde{G}_1(0)$, we consider $\tilde{G}_1(x) \equiv (\tilde{r}^2 + \Delta)(\tilde{H} \ast \tilde{H})(x)$ and calculate for the first part
\begin{align*}
\tilde{r}^2 \cdot (\tilde{H} \ast \tilde{H})(x) &= \tilde{r}^2 \cdot \int_{\R^d} \tilde{H}(\tau)\tilde{H}(x-\tau) ~ \mathrm{d}\tau \\
&= \tilde{r}^2 R^d \cdot \int_{\R^d} H(R\tau) H(R(x-\tau)) ~ \mathrm{d}\tau \\
&= \tilde{r}^2 \cdot \int_{\R^d} H(s) H(Rx-s) ~ \mathrm{d}s \\
&= \tilde{r}^2 \cdot (H \ast H)(Rx) \\
\Rightarrow \quad \tilde{r}^2 \cdot (\tilde{H} \ast \tilde{H})(0) &= \tilde{r}^2 \cdot (H \ast H)(0).
\end{align*}
For the second part, we have by using Green's identity
\begin{align*}
\Delta (\tilde{H} \ast \tilde{H})(x) &= \int_{\R^d} \tilde{H}(\tau) \Delta_x \tilde{H}(x-\tau) ~ \mathrm{d}\tau \\
&= \int_{\R^d} \tilde{H}(\tau) \Delta_\tau \tilde{H}(x-\tau) ~ \mathrm{d}\tau \\
&= -\int_{\R^d} \nabla_\tau \tilde{H}(\tau) \nabla_\tau \tilde{H}(x-\tau) ~ \mathrm{d}\tau \\
&= -R^d \cdot \int_{\R^d} \nabla_\tau H(R\tau) \nabla_\tau H(R(x-\tau)) ~ \mathrm{d}\tau \\
&= - R^2 \cdot \int_{\R^d} \nabla_s H(s) \nabla_s H(Rx-s) ~ \mathrm{d}s.
\end{align*}
Due to $H(s) = H(-s)$ we conclude $(\nabla H)(-s) = -(\nabla H)(s)$ such that we have
\begin{align*}
\Delta (\tilde{H} \ast \tilde{H})(0) &= - R^2  \int_{\R^d} \nabla_s H(s) \nabla_s H(-s) ~ \mathrm{d}s \\
&= R^2  \int_{\R^d} (\nabla H)(s) (\nabla H)(-s) ~ \mathrm{d}s \\
&= -R^2  \int_{\R^d} \nabla_s H(s) \nabla_s H(s) ~ \mathrm{d}s \\
&= -R^2 \lambda_{\min} \cdot \int_{\R^d} H(s)H(s) ~ \mathrm{d}s \\
&= -R^2 \lambda_{\min} \cdot \int_{\R^d} H(s)H(-s) ~ \mathrm{d}s \\
&= -R^2 \lambda_{\min} \cdot (H \ast H)(0) \\
&= -R^2 \lambda_{\min}.
\end{align*}

Overall we have $\tilde{G}_1(0) = (\tilde{r}^2 - R^2\lambda_{\min})$.
Thus we can continue with Eq.~\eqref{eq:intermediate_4} to obtain
\begin{align*}
(2\pi)^{d/2} \cdot \frac{(\tilde{r}^2 - R^2\lambda_{\min})}{\tilde{r}^2} \cdot R^d \Vert \alpha \Vert^2 \leq 
\int_{B_{\tilde{r}}(0)} \left| \sum_{j=1}^{|X|} \alpha_j e^{i \omega^\top x_j} \right|^2 ~ \mathrm{d}\omega.
\end{align*}
We pick $\tilde{r}^2 := 2 R^2 \lambda_{\min}$, such that the factor on the left of the previous line simplifies to $1/2$ and we have
\begin{align*}
\frac{(2\pi)^{d/2}}{2} \cdot R^d \Vert \alpha \Vert^2 \leq 
\int_{B_{\sqrt{2}R \lambda_{\min}^{1/2}}(0)} \left| \sum_{j=1}^{|X|} \alpha_j e^{i \omega^\top x_j} \right|^2 ~ \mathrm{d}\omega.
\end{align*}
Replacing $\sqrt{2}R \lambda_{\min}^{1/2}(-\Delta)$ by $R$ gives
\begin{align*}
\frac{(2\pi)^{d/2}}{2^{1+d/2} \lambda_{\min}^{d/2}} \cdot R^d \Vert \alpha \Vert^2 \leq 
\int_{B_R(0)} \left| \sum_{j=1}^{|X|} \alpha_j e^{i \omega^\top x_j} \right|^2 ~ \mathrm{d}\omega
\end{align*}
for any point set $X$ with $q_X \geq \sqrt{2}\lambda_{\min}^{1/2}/R$.
\end{itemize}
This concludes the proof of the statement with the constants
\begin{align}
\begin{aligned}
\label{eq:constants_c1_c2}
c_1 &:= \frac{\pi^{d/2}}{2 \lambda_{\min}^{d/2}(-\Delta)}, ~~
c_2 := \left(\frac{2}{\pi}\right)^{d/2} \cdot \frac{1}{\beta}
\end{aligned}
\end{align}
only depending on $d \in \N$.

For the second inequality we have the condition $q_X \geq \pi/R \Leftrightarrow R \geq \pi / q_X$, 
while the first inequality requires the condition $q_X \geq \sqrt{2}\lambda_{\min}^{1/2}(-\Delta)/R \Leftrightarrow R \geq \sqrt{2}\lambda_{\min}^{1/2}(-\Delta) / q_X$.
Note that $\sqrt{2} \lambda_{\min}^{1/2}(-\Delta) > \pi$ for any $d \in \N$.
\end{proof}

\end{document}